\numberwithin{equation}{section}
\newtheorem{theorem}{Theorem}
\newtheorem{lemma}[theorem]{Lemma}
\newtheorem{corr}[theorem]{Corollary}
\newtheorem{proposition}[theorem]{Proposition}
\newtheorem{deff}[theorem]{Definition}
\newcommand{\bth}{\begin{theorem}}
\newcommand{\ble}{\begin{lemma}}
\newcommand{\bcor}{\begin{corr}}
\newcommand{\bdeff}{\begin{deff}}
\newcommand{\bprop}{\begin{proposition}}
\newcommand{\ele}{\end{lemma}}
\newcommand{\ecor}{\end{corr}}
\newcommand{\edeff}{\end{deff}}
\newcommand{\eprop}{\end{proposition}}
\newcommand{\la}{\lambda}
\newcommand{\eps}{\varepsilon}
\newcommand{\supp}{\text{supp }}
\renewcommand{\Pi}{\varPi}
\newcommand{\dist}{{{\rm dist}}}
\newcommand{\R}{{\mathbb R}}
\thanks{ }
\begin{document}

\title[Inner product of eigenfunction]
{Inner product of eigenfunctions over curves and generalized periods for compact Riemannian surfaces}
%
%
%
%
%
\begin{abstract}  
We show that for a smooth closed curve $\gamma$ on a compact Riemannian surface without boundary, the inner product of two eigenfunctions $e_\la$ and $e_\mu$ restricted to $\gamma$, $|\int e_\la\overline{e_\mu}\,ds|$, is bounded by $\min\{\la^\frac12,\mu^\frac12\}$. Furthermore, given $0<c<1$, if  $0<\mu<c\la$, we prove that $\int e_\la\overline{e_\mu}\,ds=O(\mu^\frac14)$, which is sharp on the sphere $S^2$. These bounds unify the period integral estimates and the $L^2$-restriction estimates in an explicit way. Using a similar argument, we also show that the $\nu$-th order Fourier coefficient of $e_\la$ over $\gamma$ is uniformly bounded if $0<\nu<c\la$, which generalizes a result of Reznikov for compact hyperbolic surfaces, and is sharp on both $S^2$ and the flat torus $\mathbb T^2$. Moreover, we show that the analogs of our results also hold in higher dimensions for the inner product of eigenfunctions over hypersurfaces.
\end{abstract}

\author{Yakun Xi}

\address{Department of Mathematics, University of Rochester, Rochester, NY 14627}
\email{yxi4@math.rochester.edu}

\maketitle

\section{Introduction}
Let $e_\la$ denote the $L^2$-normalized eigenfunction on a compact, boundary-less Riemannian surface $(M,g)$, i.e., 
$$-\Delta_g e_\la=\la^2 e_\la, \quad \text{and } \, \, \int_M |e_\la|^2 \, dV_g=1.$$
Here $\Delta_g$ denotes the Laplace-Beltrami operator on $(M,g)$ and $dV_g$  is the volume element.

It is an area of interest in both number theory and harmonic analysis to study various quantitative behavior of the eigenfunctions restricted to certain smooth curve. Before stating our results, let us survey the recent developments in this area.
\subsection{Period Integrals}

Using the Kuznecov formula, Good \cite{Good} and Hejhal \cite{Hej} independently showed that if $\gamma_{per}$ is a periodic geodesic on a compact hyperbolic
surface $M$ then, uniformly in $\la$,
\begin{equation}\label{i.1}
\Bigl|\, \int_{\gamma_{per}} e_\la \, ds\, \Bigr| \le C_{\gamma_{per}}\|e_\la\|_{L^2(M)},
\end{equation}
with $ds$ denoting arc length measure on $\gamma_{per}$.

This result was generalized by Zelditch~\cite{ZelK}, who showed,  among many other things, that if $\la_j$ are the eigenvalues of $\sqrt{-\Delta_g}$  on
a compact Riemannian surface, and if $p_j(\gamma_{per})$ denote the period integrals in \eqref{i.1} 
for an orthonormal basis of eigenfunctions with eigenvalues $\la_j$,
then
\begin{equation}\label{Zel}\sum_{\la_j\le \la}|p_j(\gamma_{per})|^2 =c_{\gamma_{per}} \la +O(1),\end{equation}
which implies \eqref{i.1}.  Further work for hyperbolic surfaces giving more information about the lower order terms in terms
of geometric data for $\gamma_{per}$ was done by Pitt~\cite{Pitt}.  Since, by Weyl's Law, the number of eigenvalues (counting multiplicities) that are smaller than
$\la$ is $\sim\la^2$, this asymptotic formula \eqref{Zel} implies that, at least on average, one can do much better than \eqref{i.1}.  The problem
of improving this upper bound was raised and discussed in Pitt~\cite{Pitt} and Reznikov~\cite{Rez}.

In a paper of Chen and Sogge \cite{CSPer}, it was pointed out that \eqref{i.1} is sharp on compact
Riemannian surfaces of constant non-negative curvature.  Indeed, on the round sphere $S^2$, the integrals in \eqref{i.1} have unit size if
$\gamma_{per}$ is the equator and $e_\la$ is an $L^2$-normalized zonal spherical harmonic of even degree.  Also on the flat torus ${\mathbb T}^2$, for every
periodic geodesic, $\gamma_{per}$, one can find a sequence of eigenvalues $\la_k$ and eigenfunctions $e_{\la_k}$ so that
$e_{\la_k}\equiv 1$ on $\gamma_{per}$ and $\|e_{\la_k}\|_{L^2({\mathbb T}^2)}\approx 1$.  

Despite this, in \cite{CSPer}, it was shown that the period integrals in \eqref{i.1} are $o(1)$ as $\la\to \infty$ if $(M,g)$ has strictly negative
curvature.  The proof exploited the fact that, in this case, quadrilaterals always have their four interior angles summing to a value
strictly smaller than $2\pi$.   This allowed the authors to obtain $o(1)$ decay for period integrals using a stationary
phase argument involving reproducing kernels for the eigenfunctions. In a recent paper of Sogge, the author and Zhang \cite{gauss}, this method was further refined, and they managed to show that 
\begin{equation}\label{gauss}
\Bigl|\, \int_{\gamma_{per}} e_\la \, ds\, \Bigr| = O((\log\lambda)^{-\frac12}),
\end{equation}
under a weaker curvature assumption, by using the Gauss-Bonnet Theorem to get a quantitative version of the ideas used in \cite{CSPer}. Following similar routes, Wyman (\cite{emmett2}, \cite{emmett1}, \cite{emmett3}) generalized the results in \cite{CSPer} and \cite{gauss} to curves and submanifolds with certain interesting curvature assumptions; see also the recent work of Canzani, Galkowski and Toth \cite{toth} and Canzani, Galkowski \cite{canzani} for $o(1)$ bounds of averages over submanifolds under more general geometric assumptions.

\subsection{$L^p$-restrictions}
Another way of looking at the restriction of eigenfunctions to curves, is to estimate their $L^p(\gamma)$-restriction norms. The starting point of this direction of research is the  $L^p$-restriction bounds of Burq, G{\'e}rard and Tzvetkov \cite{BGT}, which says that
\begin{equation}\label{rest}\|e_\lambda\|_{L^p(\gamma)}\le C\lambda^{\sigma(d,p)}\|e_\lambda\|_{L^2(M)},
\end{equation}
where
\begin{eqnarray}\sigma(2,p)=
\begin{cases}
\frac14, &2\le p\le 4,
\cr \frac12-\frac1p, &4\le p\le\infty. \end{cases}
\end{eqnarray}
The special case when $p=2$ also follows from Theorem 1 in \cite{tataru}
Tataru  by considering an eigenfunction as a stationary solution of the wave equation, which then follows from classical estimates on Fourier integral operators with fold singularities by Greenleaf and Seeger \cite{Greenleaf}.

There has been considerable work towards improving \eqref{rest} when $\gamma$ is a geodesic, for Riemannian surface with non-positive curvature. B\'erard's sup-norm estimate \cite{Berard} provides natural improvements for large $p$. In \cite{chen}, Chen managed to improve over \eqref{rest} for all $p>4$ by a factor of $(\log\lambda)^{-\frac12}$:
\begin{equation}
\|e_\lambda\|_{L^p(\gamma)}\le C\frac{\lambda^{\frac12-\frac1p}}{(\log\lambda)^\frac12}\|e_\lambda\|_{L^2(M)}.
\end{equation}
Sogge and Zelditch \cite{SZStein} showed that one can improve \eqref{rest} for $2\le p<4$, in the sense that
\begin{equation}\label{oh}
\|e_\lambda\|_{L^p(\gamma)}=o(\lambda^\frac14),
\end{equation}
A few years later, Chen and Sogge \cite{ChenS} showed that the same conclusion can be drawn for $p=4$:
\begin{equation}\label{chensogge}
\|e_\lambda\|_{L^4(\gamma)}=o(\lambda^\frac14).
\end{equation}
\eqref{chensogge} is the first result to improve an estimate that is saturated by both zonal functions and highest weight spherical harmonics.
Recently, by using Toponogov's comparison theorem, Blair and Sogge \cite{BSTop} showed that it is possible to get log improvements for $L^2$-restriction:
\begin{equation}\label{top1}
\|e_\lambda\|_{L^2(\gamma)}\le C\frac{\lambda^\frac14}{(\log\lambda)^\frac12}\|e_\lambda\|_{L^2(M)},
\end{equation}
By following the same road map, further improvements were obtained by the author and Zhang \cite{xizhang} for the critical $L^4$-restriction estimate.
They showed that for manifold of non-positive curvature, 
	\begin{equation}\label{main}
\|e_\lambda\|_{L^4(\gamma)}\le C\lambda^\frac14(\log\log\lambda)^{-\frac18}.\end{equation}
And for surfaces with constant negative curvature
	\begin{equation}\label{eq7}\|e_\lambda\|_{L^4(\gamma)}\le C\lambda^\frac14(\log\lambda)^{-\frac12}\|e_\lambda\|_{L^2(M)}.\end{equation}
In a recent work of Blair \cite{Blair}, \eqref{eq7} has been generalized to all non-positively curved Riemannian surfaces. See also the recent work of Hezari \cite{hezari} in this direction. We also mention \cite{Bourgainef},
\cite{Skn} and \cite{BSAPDE} among further developments which reversed \eqref{rest}, in the sense that,
\begin{equation}\label{KN}
\|e_\la\|_{L^4(M)}\le C\la^{\frac1{8}+\eps}\sup_\gamma\|e_\la\|_{L^2{(\gamma)}}^\frac12,
\end{equation}
where the $\sup$ is taking over all unit length geodesics. See also \cite{bilinearKN} for a bilinear version of \eqref{KN}.

\subsection{Inner Product of Eigenfunctions over Smooth Curves}
Our goal in this paper, is to unify both the period integrals \eqref{i.1} and the $L^2$-restriction \eqref{rest} bounds by regarding them as different inner products of restricted eigenfunctions. We can think of \eqref{i.1} as the inner product of restricted eigenfunction $e_\lambda|_\gamma$ with the constant function $1|_\gamma$, which happens to be the eigenfunction associated with eigenvalue 0. On the other hand, we can think of \eqref{rest} as the inner product of $e_\la|_\gamma$ with itself. Therefore it is natural to bi-linearize this problem, and study the inner product of two eigenfunctions $e_\lambda$ and $e_\mu$ over a smooth closed curve $\gamma$, in the  form of
\begin{equation}\label{i.2}\int_{\gamma}e_\lambda \overline{e_\mu}\, ds.\end{equation}
Bilinear eigenfunction estimates arise naturally in the study of harmonic analysis and PDEs. For instance, in \cite{BGT'}, Burq, G{\'e}rard and Tzvetkov obtained general bilinear eigenfunction bounds over the whole manifold, in the sense that for $\mu\le\la$,
\begin{equation}\label{bilinear}\|e_\lambda e_\mu\|_{L^2(M)}\le C\mu^\frac{1}{4}.
\end{equation}
In contrast, Sogge's classical $L^p$ estimates \cite{Sef} together with H\"older inequality implies that \begin{equation}\label{holder}\|e_\lambda e_\mu\|_{L^2(M)}\le C\la^\frac18\mu^\frac{1}{8}.
\end{equation}
Clearly, \eqref{bilinear} provides a significant improvement over \eqref{holder}, which was then exploited in \cite{BGT'} to obtain sharp well-posedness results for the  Schr\"odinger equations on Zoll surfaces. Further refinements in the sense of \cite{Skn} can be found in \cite{bilinearKN}, see also the work of Guo, Han and Tacy \cite{han} which provides a full range of bilinear estimates.

We remark that the seemingly natural analog of \eqref{bilinear} for restricted eigenfunctions, i.e.,  $\|e_\lambda e_\mu\|_{L^2(\gamma)}$, cannot have any estimates better than the trivial bounds coming from \eqref{rest} and H\"older's inequality:
\begin{equation}\label{HH}\|e_\lambda e_\mu\|_{L^p(\gamma)}\lesssim\|e_\lambda e_\mu\|_{L^2(\gamma)}\le C\la^\frac14\mu^\frac14,\quad\text{for }1\le p\le2.\end{equation}
The reason is that if we take $Q_k$ and $Q_m$  to be two $L^2$-normalized highest weight spherical harmonics of degree $m$, and $k$ respectively, then on the equator $\gamma$, $|Q_k(\gamma(s))|\sim k^\frac14$, and $|Q_m(\gamma(s))|\sim m^\frac14$ for any $s$, and therefore,
$$\|Q_k Q_m\|_{L^p(\gamma)}\sim m^\frac14k^\frac14,\quad\text{for }1\le p\le\infty,$$
which indicates that \eqref{HH} is sharp for $1\le p\le 2$ on $S^2$.

Now let us turn our attention back to \eqref{i.2}. If we fix $\lambda$, and let $\mu\rightarrow\infty$, then the argument in \cite{ChenS} and \cite{gauss} shows that this integral will be uniformly bounded by $C_\mu$ for general Riemannian surfaces, and will decay like $C_\mu(\log\lambda)^{-\frac12} $ if $\gamma$ is a geodesic for negatively curved surfaces. It is then natural to ask, what happens if  $\lambda,\, \mu\rightarrow\infty$ simultaneously? First, for general surfaces, a simple observation is that if $\lambda=\mu$, this integral \eqref{i.2} resembles $\|e_\lambda\|_{L^2(\gamma)}^2$, which is $\gtrsim\lambda^\frac12$ on the sphere. On the other hand, \eqref{rest} implies that
\begin{equation}\left|\label{i.3}\int_{\gamma}e_\lambda \overline{e_\mu}\, ds\right|\le C\lambda^\frac14\mu^\frac14.\end{equation}
Then it is natural to ask whether one can obtain a better bound than the trivial estimate in \eqref{i.3} for $\la\neq\mu$. Such an improvement would show some degree of orthogonality for $e_\lambda|_\gamma$ and $e_\lambda|_\mu$, and is possible as shown by our main results.
\begin{theorem}\label{thm1}Let $(M,g)$ be a compact two-dimensional boundary-less Riemannian manifold. Let $\gamma$ be a smooth closed curve on $(M,g)$ parametrized by arc length, and let $|\gamma|$ denote
	its length. Then for $0\le\mu\le\la$,
	\begin{equation}\label{1/2}
	\Big|\int_{\gamma}e_\lambda \overline{e_\mu}\, ds\Big|\le C |\gamma|(1+\mu)^\frac12.
	\end{equation}
Moreover, given $0<c<1$, if we assume $0\le\frac{\mu}{\la}<c<1$ for some fixed constant $c$, then
	\begin{equation}\label{inner}
	\Big|\int_{\gamma}e_\lambda \overline{e_\mu}\, ds\Big|\le C |\gamma|(1+\mu)^\frac14,
	\end{equation}
	where the constant $C$ only depends on $M$ and $c$.
\end{theorem}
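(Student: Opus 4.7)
The plan is to reduce the integral to a bilinear form in $e_\mu|_\gamma$ via a $TT^*$-style reproducing kernel argument, then extract a gain $(\lambda-\mu+1)^{-1/2}$ from an oscillatory pairing matched against the effective tangential-frequency support $|\tau|\leq\mu$ of $e_\mu|_\gamma$. Fix a real Schwartz $\rho$ with $\rho(0)=1$ and $\hat\rho\in C_0^\infty((-\delta,\delta))$ for $\delta$ smaller than half the injectivity radius of $(M,g)$. Since $\rho(\lambda-\sqrt{-\Delta_g})e_\lambda=e_\lambda$, self-adjointness and Cauchy--Schwarz, applied after unfolding the inner product against $e_\mu\,\delta_\gamma\in\mathcal{D}'(M)$, give
\[
\Big|\int_\gamma e_\lambda\overline{e_\mu}\,ds\Big|^2 \leq \|\rho(\lambda-\sqrt{-\Delta_g})(e_\mu\,\delta_\gamma)\|_{L^2(M)}^2 = \int_\gamma\int_\gamma T_\lambda(s,s')\,e_\mu(s')\,\overline{e_\mu(s)}\,ds\,ds',
\]
where $T_\lambda=\rho^2(\lambda-\sqrt{-\Delta_g})$.

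Next I would compute $T_\lambda$ via Fourier inversion and the Hadamard parametrix for $e^{-it\sqrt{-\Delta_g}}$ on $|t|<2\delta$. A stationary phase in $t$ (localizing $|\xi|_g$ near $\lambda$) and in the angular component of $\xi$ yields, in Fermi coordinates along $\gamma$,
\[
T_\lambda(s,s') = \sum_{\pm}\lambda^{1/2}|s-s'|^{-1/2}\,a_\pm(s,s';\lambda)\,e^{\pm i\lambda\,d_g(s,s')} + R_\lambda(s,s'),
\]
with uniformly bounded amplitudes $a_\pm$ and a remainder dominated pointwise by $\lambda\,\mathbf{1}_{|s-s'|\lesssim 1/\lambda}$; by Schur the remainder contributes at most $C\|e_\mu\|_{L^2(\gamma)}^2\leq C(1+\mu)^{1/2}$ by \eqref{rest}. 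To bound the leading oscillatory part, Fourier-expand $e_\mu|_\gamma=\sum_\tau\hat\alpha(\tau)e^{i\tau s}$; repeated integration by parts in $s$ combined with the pointwise derivative bounds $\|\partial^N e_\mu\|_{L^\infty(M)}\leq C_N(1+\mu)^{N+1/2}$ give $|\hat\alpha(\tau)|=O_N((1+\mu)^{1/2}(\mu/|\tau|)^N)$ for $|\tau|\gg\mu$, so $\hat\alpha$ is effectively supported in $|\tau|\leq\mu+O(1)$. After orthogonality in the transverse direction collapses the double Fourier sum, the oscillatory integral reduces to
\[
\lambda^{1/2}\sum_\tau|\hat\alpha(\tau)|^2\int|u|^{-1/2}e^{i(\pm\lambda-\tau)u}\,\tilde b(u,\tau)\,du,
\]
in which the $u$-integral is $\leq C(|\pm\lambda-\tau|+1)^{-1/2}\leq C(\lambda-\mu+1)^{-1/2}$ uniformly over $\tau$ in the effective support. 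Combining with \eqref{rest},
\[
\Big|\int_\gamma e_\lambda\overline{e_\mu}\,ds\Big|^2 \leq C\,|\gamma|^2\,\lambda^{1/2}(\lambda-\mu+1)^{-1/2}(1+\mu)^{1/2}.
\]

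For any $0\leq\mu\leq\lambda$, the elementary inequality $\lambda\leq(1+\mu)(\lambda-\mu+1)$ (equivalent to $0\leq 1+\mu(\lambda-\mu)$) yields $\lambda^{1/2}(\lambda-\mu+1)^{-1/2}\leq(1+\mu)^{1/2}$, hence \eqref{1/2}. Under $\mu\leq c\lambda<\lambda$ one has the sharper $\lambda^{1/2}(\lambda-\mu+1)^{-1/2}\leq(1-c)^{-1/2}$, producing \eqref{inner}. The main technical obstacle is the rigorous implementation of the stationary phase and the rapid-decay statement for $\hat\alpha$ when $\gamma$ is not a geodesic: the phase function $d_g(s,s')$ carries geodesic-curvature corrections that must be tracked through the oscillatory analysis, and the transition region $|\tau|\in[\mu,(1+\epsilon)\mu]$, where the integration-by-parts gain is weakest, must be handled via a careful microlocal (FBI or wave-packet) decomposition of $e_\mu|_\gamma$ adapted to $\gamma$. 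The delicate case is $\mu$ just below $\lambda$, where the oscillatory gain $(\lambda-\mu+1)^{-1/2}$ becomes trivial and the elementary inequality is tight.
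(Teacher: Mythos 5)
The paper's proof and your proposal take genuinely different routes. The paper applies reproducing operators $\rho(\lambda-\sqrt{-\Delta_g})$ to \emph{both} $e_\lambda$ and $e_\mu$, producing a two-frequency phase $\lambda d_g(\gamma(s),x)-\mu d_g(\gamma(s),y)$; after a stationary-phase in $s$ (whose Hessian lower bound is exactly where the condition $\mu<c\lambda$ is used), they pass to polar coordinates $x=r\omega$, freeze $r$, and bound a kernel $K_r(\omega,\omega')$ by $(1+\mu|\omega-\omega'|)^{-1/2}$ via a Bourgain-type decomposition, closing with Hardy--Littlewood--Sobolev. By contrast, you apply the reproducing operator only to $e_\lambda$, transfer to $T_\lambda=\rho^2(\lambda-\sqrt{-\Delta_g})$ by $TT^*$, and then do Fourier analysis of $e_\mu|_\gamma$ on the circle, extracting the gain from the frequency mismatch $\lambda-\mu$. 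Your route also naturally produces the sharper interpolating bound $\lambda^{1/2}(\lambda-\mu+1)^{-1/2}(1+\mu)^{1/2}$, which subsumes both \eqref{1/2} and \eqref{inner}; this is an attractive feature that the paper's method does not directly yield.

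That said, there is a real gap at the step ``after orthogonality in the transverse direction collapses the double Fourier sum.'' The bilinear form you need to control is $\sum_{\tau,\tau'}\hat\alpha(\tau')\overline{\hat\alpha(\tau)}M(\tau,\tau')$ with $M(\tau,\tau')=\iint T_\lambda(\gamma(s),\gamma(s'))e^{i\tau's'-i\tau s}\,ds\,ds'$, and this collapses to the diagonal only if $T_\lambda(\gamma(s),\gamma(s'))$ is (approximately) a convolution kernel in $s-s'$. For a non-geodesic $\gamma$ the phase $d_g(\gamma(s),\gamma(s'))=|s-s'|-\tfrac{1}{24}\kappa_g(s)^2|s-s'|^3+\cdots$ genuinely depends on $s$; writing $u=s-s'$, the $s$-derivative of the phase is $O(\lambda|u|^3)$, which for $|u|$ comparable to the fixed parametrix scale $\epsilon_0$ produces off-diagonal coupling $M(\tau,\tau')$ spread over $|\tau-\tau'|\lesssim\lambda\epsilon_0^3$, a window that grows with $\lambda$. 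A naive row-$\ell^2$/Schur bound over this spread loses a factor up to $\mu^{1/2}$ relative to the purely diagonal computation, destroying the claimed $(\lambda-\mu+1)^{-1/2}$ gain. You can likely rescue the argument by observing that for $|\tau|,|\tau'|\leq\mu<c\lambda$ the dominant contribution to $M(\tau,\tau')$ comes from $|u|\sim 1/\lambda$ (where the coupling scale $\lambda|u|^3\sim\lambda^{-2}$ is negligible and the remaining $s$-dependence is only through the smooth amplitude), while $|u|\sim\epsilon_0$ is nonstationary with derivative $\gtrsim\lambda$ and contributes only $O(\lambda^{-N})$---but this requires a careful splitting in $u$ that your write-up elides, and it is exactly the kind of bookkeeping the paper's polar-coordinate Cauchy--Schwarz reorganization is designed to sidestep. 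You flag both this issue and the geodesic-curvature corrections at the end, so you are aware the proof is incomplete as written; the missing step is not cosmetic.

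One smaller point: your ``effective support'' statement $|\hat\alpha(\tau)|=O_N((1+\mu)^{1/2}(\mu/|\tau|)^N)$ is obtained from $L^\infty$ bounds and is weaker than what the actual frequency localization gives (the tangential frequency of $e_\mu|_\gamma$ is confined to $|\tau|\leq\mu+O(1)$, an $O(1)$ window, not an $O(\epsilon\mu)$ one). This matters because the worry you raise about a ``transition region $|\tau|\in[\mu,(1+\epsilon)\mu]$'' is a nonissue; the genuine transition is only $O(1)$ wide, and the delicate regime is instead $\mu$ within $O(1)$ of $\lambda$, where indeed the trivial H\"older bound already gives \eqref{1/2} and no gain is needed. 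Making that distinction explicit would simplify the endgame.
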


\eqref{1/2}  captures both the $O(1)$ bound of the period integral \eqref{i.1} (when $\mu=0$), and  the $L^2(\gamma)$-restriction bound of Burq, G{\'e}rard and Tzvetkov \cite{BGT} (when $\la=\mu$). Moreover, \eqref{1/2} and \eqref{inner} also share the flavor of the bilinear estimate \eqref{bilinear}, in the sense that the high frequency factor in the H\"older bounds disappears.  \eqref{inner} indicates that, even though the $L^2(\gamma)$-restriction norms of $e_\la$ and $e_\mu$ can be as large as $\la^\frac14$ and $\mu^\frac14$ respectively, their inner product over $\gamma$ cannot be as large, which implies the existence of some kind of orthogonality between $e_\la|_\gamma$ and $e_\mu|_\gamma$.
Note also that for $c\la\le\mu\le\la$, \eqref{1/2} is a simple consequence of \eqref{rest} and H\"older's inequality, so we only need to prove \eqref{inner}, which is stronger than \eqref{1/2} in the range $0\le\mu<c\lambda$.

By a standard partition of unity argument, we can see that Theorem \ref{thm1} follows from the following local results.
\begin{theorem}\label{thm2}
		Let $(M,g)$ be a compact two-dimensional boundary-less manifold. Let $\gamma$ be a smooth curve on $(M,g)$ parametrized by arc length. If $0\le\mu\le \la$, then for any test function  $\chi(s)\in C_0^\infty(-\frac12,\frac12)$ we have 
	\begin{equation}
\Big|\int_{\gamma}e_\lambda(\gamma(s))\, \overline{e_\mu(\gamma(s))}\,\chi(s)\, ds\Big|\le C (1+\mu)^\frac12.
\end{equation}
Moreover, if we assume $0\le\frac{\mu}{\la}<c<1$ for some fixed constant $c$, then
\begin{equation}
\Big|\int_{\gamma}e_\lambda(\gamma(s))\, \overline{e_\mu(\gamma(s))}\,\chi(s)\, ds\Big|\le C (1+\mu)^\frac14,
\end{equation}
	where the constant $C$ only depends on $M$ and $c$.
\end{theorem}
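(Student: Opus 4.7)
I would combine the reproducing-kernel strategy of Burq--G\'erard--Tzvetkov with a frequency-localized operator bound that exploits the spectral gap $\mu<c\la$.

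Pick $\rho\in\mathcal S(\mathbb R)$ real and even, with $\rho(0)=1$ and $\hat\rho$ supported in an interval shorter than the injectivity radius of $(M,g)$; then $\rho(\la-\sqrtg)e_\la=e_\la$. Substituting this identity and switching orders of integration rewrites the quantity to be bounded as $B=\int_M e_\la(x)G(x)\,dx$ with
\[
G(x)=\int\chi(s)\,\rho(\la-\sqrtg)(\gamma(s),x)\,\overline{e_\mu(\gamma(s))}\,ds.
\]
Cauchy--Schwarz and $\|e_\la\|_{L^2(M)}=1$ give $|B|\leq\|G\|_{L^2(M)}$; expanding the square and integrating out $x$ yields
\[
\|G\|_{L^2(M)}^2=\langle \mathcal K_\la w,\,w\rangle_{L^2(\mathbb R)},
\]
where $w(s)=\chi^{1/2}(s)e_\mu(\gamma(s))$ and $\mathcal K_\la$ is the integral operator on $L^2(\mathbb R)$ with kernel $\chi^{1/2}(s)\chi^{1/2}(s')\,|\rho|^2(\la-\sqrtg)(\gamma(s),\gamma(s'))$.

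The crux is then a frequency-localized refinement of the $TT^*$ bound $\|\mathcal K_\la\|_{L^2\to L^2}\lesssim\la^{1/2}$ underlying \eqref{rest}. My key claim is that
\[
\langle \mathcal K_\la v,\,v\rangle\leq C\|v\|_{L^2(\mathbb R)}^2
\]
for every $v$ whose Fourier transform (in the arclength variable) is supported in $[-c'\la,c'\la]$ for some $c'<1$, with $C$ independent of $\la$. In the flat model $(M,\gamma)=(\mathbb R^2,\text{a straight line})$ this is an explicit Plancherel computation: the Fourier symbol of the curve-kernel of $|\rho|^2(\la-\sqrtg)$ equals $\sim \la/\sqrt{\la^2-\omega^2}$ for $|\omega|<\la$ and is therefore bounded by $(1-c'^{\,2})^{-1/2}$ on $|\omega|\leq c'\la$. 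For a general Riemannian surface I would expand $\mathcal K_\la$ via the H\"ormander parametrix, writing its kernel as a sum of oscillatory integrals $\la^{1/2}e^{\pm i\la d_g(\gamma(s),\gamma(s'))}a_\pm$ plus an $O(\la^{-N})$ remainder, and then integrate by parts in $(s,s')$ to exploit that the composite phase is uniformly non-stationary on any $v$ whose Fourier support lies below $c'\la$; the $|s-s'|^{-1/2}$ diagonal singularity in $a_\pm$ is handled by the standard splitting at scale $|s-s'|\sim\la^{-1}$.

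With the key estimate in hand, apply it to $v=w$ after checking that $w=\chi^{1/2}\,e_\mu\circ\gamma$ has effective Fourier support in $[-\mu,\mu]\subset[-c\la,c\la]$; this comes from writing $e_\mu=\rho(\mu-\sqrtg)e_\mu$ and observing via the parametrix that the oscillations of $e_\mu\circ\gamma$ in arclength occur at frequencies of modulus at most $\mu$, up to rapidly decaying tails. Combining with the BGT restriction estimate $\|w\|_{L^2}^2=\|\chi^{1/2}e_\mu\circ\gamma\|_{L^2}^2\leq C(1+\mu)^{1/2}$ yields $\|G\|_{L^2(M)}^2\leq C(1+\mu)^{1/2}$, hence
\[
\Bigl|\int\chi(s)\,e_\la(\gamma(s))\,\overline{e_\mu(\gamma(s))}\,ds\Bigr|\leq C(1+\mu)^{1/4},
\]
which is the improved bound. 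The weaker bound $(1+\mu)^{1/2}$ follows at once, either from the improved bound (when $\mu<c\la$) or from a direct Cauchy--Schwarz plus \eqref{rest} (when $\mu\geq c\la$, where $(\la\mu)^{1/4}\lesssim\mu^{1/2}$).

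\textbf{Main obstacle.} The most delicate step is the frequency-localized operator bound. The unrestricted operator norm $\la^{1/2}$ of $\mathcal K_\la$ is saturated by inputs whose Fourier content sits near $\pm\la$ (corresponding microlocally to the tangential directions along $\gamma$), and improving to $O(1)$ on the spectrally lower subspace must be proved uniformly over all sign combinations produced by the parametrix while simultaneously taming the diagonal singularity of its amplitude. Making this quantitative on a general curved surface---rather than on the flat model where Plancherel handles it for free---is where the main technical effort is concentrated.
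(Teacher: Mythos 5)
Your proposal takes a genuinely different route from the paper's. The paper substitutes the reproducing-kernel parametrix for \emph{both} $e_\la$ and $e_\mu$, performs stationary phase in the arc-length variable $s$ (the crucial lower bound on $\bigl|\partial_s^2\Phi\bigr|$ at the critical point is exactly where the assumption $\mu/\la<c$ is used), then writes $x$ in geodesic polar coordinates and runs a $TT^*$ argument in the angular variable $\omega$; the key kernel bound $|K_r(\omega,\omega')|\lesssim(1+\mu|\omega-\omega'|)^{-1/2}$ is obtained by a dyadic decomposition of the $y$-integral near the axis $\{y_2=0\}$, after which Hardy--Littlewood--Sobolev finishes. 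You instead use the parametrix for $e_\la$ only, run $TT^*$ in $s$, and reduce everything to two claims: a frequency-localized operator bound for the curve $TT^*$ operator $\mathcal K_\la$, and the assertion that $\chi^{1/2}e_\mu|_\gamma$ has Fourier mass essentially in $[-\mu,\mu]$. This decoupling of the two frequency scales is conceptually cleaner, and it has the pleasant side effect that your key claim applied to $v=e^{-i\nu s}\chi^{1/2}$ immediately recovers Theorem \ref{thm3}. The flat-model Plancherel computation ($\la/\sqrt{\la^2-\omega^2}$ bounded on $|\omega|\le c'\la$) is the right sanity check, and I believe both claims are true.

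The gap is that neither claim is actually proved, and the key claim is the theorem in disguise. On a general surface, proving $\langle\mathcal K_\la v,v\rangle\le C\|v\|_{L^2}^2$ for $v$ with $\operatorname{supp}\hat v\subset[-c'\la,c'\la]$ requires (a) decomposing the $TT^*$ kernel into $\la^{1/2}|s-s'|^{-1/2}\sum_\pm a_\pm e^{\pm i\la\psi(s,s')}$ plus a remainder, (b) using that $|\pm\la\,\partial_s\psi-\omega|\gtrsim(1-c')\la$ near the diagonal, (c) splitting at scale $|s-s'|\sim\la^{-1}$ to tame the $|s-s'|^{-1/2}$ singularity, and (d) turning the resulting pointwise $t$-integral bound into a Schur-type bound with decay in $|\omega-\omega'|$ via further integration by parts in $s'$. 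Your two-sentence sketch gestures at (b) and (c) but omits (d), and the interleaving of the singular amplitude with the double IBP is not routine. Similarly, ``effective Fourier support'' is not a literal statement—compactly supported $w$ cannot have compactly supported $\hat w$—so you need a quantitative tail bound of the form $|\hat w(\omega)|\lesssim_N\mu^{1/2}|\omega|^{-N}$ for $|\omega|>(1+\delta)\mu$ (obtainable by applying the parametrix to $e_\mu$ and integrating by parts), followed by a splitting of $w$ in which the $|\omega|>c'\la$ tail is absorbed using the crude $\la^{1/2}$ bound on $\mathcal K_\la$. These pieces fit together, but you have stopped precisely at the steps that carry the technical weight of the theorem.
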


By essentially the same proof, Theorem \eqref{thm1} can be generalized to higher dimensions for inner product of eigenfunctions over hypersurfaces. 

\begin{theorem}\label{thm4}Let $(M,g)$ be a compact $n$-dimensional boundary-less Riemannian manifold. Let $H$ be a smooth closed hypersurface on $(M,g)$, and let $|H|$ denote
	its $(n-1)$-dimensional Hausdorff measure. Then for $0\le\mu\le\la$,
	\begin{equation}\label{1/2'}
	\Big|\int_{H}e_\lambda \overline{e_\mu}\, d\sigma\Big|\le C |H|(1+\mu)^\frac{n-1}{2}.
	\end{equation}
	Moreover, given $0<c<1$, if we assume $0\le\frac{\mu}{\la}<c<1$ for some fixed constant $c$, then
	\begin{equation}\label{inner'}
	\Big|\int_{H}e_\lambda \overline{e_\mu}\, d\sigma\Big|\le C |H|(1+\mu)^\frac14,
	\end{equation}
	where the constant $C$ only depends on $M$ and $c$.
\end{theorem}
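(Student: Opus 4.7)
The plan is to adapt the strategy of Theorem \ref{thm2} (the local version of Theorem \ref{thm1}) to higher dimensions. Because $H$ is a codimension-one hypersurface---the same codimension as the curve $\gamma$ in the $2$-dimensional case---the oscillatory-integral mechanism carries over without structural change; only the powers of $\la$ and $\mu$ in the Hadamard parametrix need updating, and the final bound $(1+\mu)^{1/4}$ is, strikingly, dimension-free. As already remarked for the $2$D case, \eqref{1/2'} follows from \eqref{inner'} in the range $0\le\mu<c\la$, and from Cauchy--Schwarz combined with the restriction bound $\|e_\la\|_{L^2(H)} \le C\la^{(n-1)/4}$ in the complementary range $c\la \le \mu \le \la$, so only \eqref{inner'} requires a genuine argument.

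First, I would localize via a finite smooth partition of unity on $H$, reducing to the statement $|\int_U e_\la(\varphi(u)) \overline{e_\mu(\varphi(u))} \chi(u)\,du| \le C(1+\mu)^{1/4}$ for a coordinate chart $\varphi:U\to H$ with $U\subset \R^{n-1}$ and $\chi \in C_0^\infty(U)$, under the hypothesis $\mu < c\la$. Next, fix $\rho \in \mathcal S(\R)$ with $\rho(0)=1$ and $\hat\rho \in C_0^\infty$ supported strictly inside the injectivity radius of $(M,g)$; then $\rho(\la - P)e_\la = e_\la$ with $P = \sqrt{-\Delta_g}$, and the Hadamard parametrix for the half-wave operator $e^{-itP}$ gives $K_\la(x,y) = \la^{(n-1)/2}\sum_\pm a_\pm(\la,x,y)\, e^{\pm i\la d(x,y)} + O(\la^{-N})$ for the kernel of $\rho(\la-P)$, with $a_\pm$ uniformly bounded symbols. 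The sole difference from the $2$D setting is the prefactor $\la^{(n-1)/2}$ in place of $\la^{1/2}$, coming from the $(n-1)$-dimensional cosphere.

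Dualizing the pairing and applying Cauchy--Schwarz on $L^2(M)$ reduces the problem to showing $\|F\|_{L^2(M)}^2 \le C(1+\mu)^{1/2}$, where $F(y) = \int_H \overline{K_\la(x,y)}\,\overline{e_\mu(x)}\,\chi(x)\,d\sigma(x)$. By Fubini, $\|F\|_{L^2(M)}^2$ equals $\int_H\int_H \tilde K_\la(x,x')\,\overline{e_\mu(x)}\, e_\mu(x')\, \chi(x)\,\overline{\chi(x')}\,d\sigma(x)\,d\sigma(x')$, where $\tilde K_\la$ is the kernel of $\rho(\la-P)\rho(\la-P)^*$ and has the same Hadamard form (with a different bounded symbol). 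Inserting that form for $\tilde K_\la$ and then the reproducing identity $e_\mu = \rho(\mu - P)e_\mu$ for the two $e_\mu$ factors converts this into an oscillatory integral on $H\times H\times M\times M$ with phase $\pm\la d(x,x') \pm \mu d(x,z) \pm \mu d(x',z')$. Under $\mu < c\la$, the gradient of this phase in $(x,x')$ is bounded below by $(1-c)\la$ away from the diagonal $\{x=x'\}$, and repeated integration by parts in those variables yields enough negative powers of $\la$ to absorb the $\la^{(n-1)/2}$ prefactor.

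The main anticipated difficulty is the handling of the diagonal $\{x=x'\}$ in $H\times H$, where $d$ is only Lipschitz and standard stationary-phase arguments break down; in higher dimensions this diagonal is $(n-1)$-dimensional, enlarging the relevant region. The resolution---already the technical heart of the $2$D argument---is to decompose dyadically in $d(x,x')$: on shells of radius $r \gtrsim \la^{-1}$, non-stationary integration by parts produces a factor $(\la r)^{-N}$, while on the innermost shell one falls back on a direct size estimate combined with the $L^2$-restriction bound $\|e_\mu\|_{L^2(H)} \le C\mu^{(n-1)/4}$ and the small measure of the near-diagonal region. Summing and balancing across scales yields the target $(1+\mu)^{1/2}$ bound for $\|F\|_{L^2(M)}^2$, hence $(1+\mu)^{1/4}$ for the original inner product. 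All remaining modifications from the $2$D proof are routine bookkeeping of the dimensional exponents.
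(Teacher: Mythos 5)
Your proposal takes a genuinely different route from the paper, and as sketched it has gaps serious enough that I do not think it gives \eqref{inner'} for $n>2$.

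The paper's proof (both in Section 4 for $n=2$ and in Section 6 for general $n$) does \emph{not} dualize against $e_\la$ and does \emph{not} decompose dyadically in $d(x,x')$ on $H\times H$. The structure is instead: (i) apply stationary phase in the $(n-1)$-dimensional hypersurface variable $z$ to the phase $\Phi(z,x,y)=d(H(z),x)-\epsilon d(H(z),y)$, which is nondegenerate precisely because $\epsilon=\mu/\la<c<1$ and which eliminates the $\la^{(n-1)/2}$ prefactor exactly; (ii) pass to polar coordinates $x=r\omega$, Cauchy--Schwarz in $r$ and against $e_\mu$; (iii) run a $TT^*$ argument in the angular variable $\omega$, where the key point is that the resulting phase $\Psi_r(\omega,\omega',y)$ oscillates at frequency $\mu$, and the kernel satisfies $|K_r(\omega,\omega')|\lesssim(1+\mu|\omega-\omega'|)^{-1/2}$; (iv) to prove this kernel bound, decompose dyadically in a \emph{coordinate of the $y$-variable} (distance to the plane where the gradient degenerates), à la Bourgain. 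When you write ``decompose dyadically in $d(x,x')$ \ldots already the technical heart of the $2$D argument,'' this misdescribes the $2$D proof; the dyadic decomposition occurs in the dual variable, inside the kernel estimate, not on $H\times H$. For $n>2$ the paper adds a further ingredient that is absent from your sketch: it integrates $y$ over an $(n-2)$-parameter family of $2$-planes through the normal direction and proves the split kernel bound $|K_r^Y(\omega,\omega')|\le C_N(1+\mu|\omega^*-\omega'^*|)^{-1/2}(1+\mu|\omega^\perp-\omega'^\perp|)^{-N}$. This slicing is what keeps the final estimate dimension-free; nothing in your outline replaces it.

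On its own terms, your scheme is also not verified in the place it matters. After Cauchy--Schwarz against $e_\la$, the quantity $\|F\|_{L^2(M)}^2$ is a bilinear form on $H\times H$ against $\overline{e_\mu(x)}\,e_\mu(x')$ with kernel of size $\la^{(n-1)/2}$. Two issues arise. First, once you insert $e_\mu=\rho(\mu-P)e_\mu$ to generate oscillation, the gradient in $x$ tangent to $H$ of $\la d(x,x')$ has magnitude close to $1$ (not $\sim d(x,x')$), so integration by parts yields $\la^{-1}$ per step, with any $r^{-1}$ gains coming from the singular higher derivatives of $d(x,x')$ and the amplitude; the claimed $(\la r)^{-N}$ needs to be justified, and the dependence on the two inserted prefactors $\mu^{(n-1)/2}$ (one for each $e_\mu$) must be tracked. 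Second, and more fundamentally, the arithmetic of the near-diagonal and dyadic shells must produce $\mu^{1/2}$ for $\|F\|^2_{L^2(M)}$, and a straightforward count (kernel size $\la^{(n-1)/2}$, shell measure $2^{-k(n-1)}$, $\mu^{n-1}$ from the inserted parametrices) does not obviously yield a dimension-free bound; the cancellation that saves the paper's argument is the stationary-phase reduction in $z$, which never happens in your decomposition. At minimum, you would need to carry out the shell sums explicitly and show that the exponents balance; I do not see that they do for $n\ge 3$.
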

Similar to Theorem \ref{thm1}, \eqref{1/2'}  captures both the $O(1)$ bound of the integral of eigenfunction over hypersurfaces of Zelditch \cite{ZelK} (when $\mu=0$), and  the $L^2(\gamma)$-restriction bound of Burq, G{\'e}rard and Tzvetkov \cite{BGT} and Tataru \cite{tataru} (when $\la=\mu$), which says that
\begin{equation}\label{rest'}
\|e_\lambda\|_{L^2(H)}\le C\lambda^\frac14.
\end{equation}
Note again that for $c\la\le\mu\le\la$, \eqref{1/2'} is a simple consequence of \eqref{rest'} and H\"older's inequality, so we only need to prove \eqref{inner'}, which is stronger than \eqref{1/2'} in the range $0\le\mu<c\lambda$.

\subsection{Generalized Periods.}
Another way of looking at the period integral \eqref{i.1} is to regard it as the 0-th order Fourier coefficient of $e_\la|_\gamma$. The general $\nu$-th order Fourier coefficients of $e_\la|_\gamma$ are called {\it generalized periods}. Generalized periods for hyperbolic surfaces naturally arise in the theory
of automorphic functions, and are of interest in their own right, thus have been studied recently by number theorists.  It is shown by Reznikov \cite{Rez} that on hyperbolic surfaces, if $\gamma$ is a periodic geodesic or a geodesic circle, the $\nu$-th order Fourier coefficients of $e_\lambda|_\gamma$ is uniformly bounded if $\nu\le c_\gamma\la$ for some constant $c_\gamma$ depending on $\gamma$. Another key insight, which is provided by the results in 
\cite{gauss}, is that the first $\nu$ Fourier coefficients of $e_\lambda(\gamma(t))$ as a function on $S^1$, are uniformly bounded by $C_\nu(\log\lambda)^{-\frac12}$.  In this spirit, as a byproduct of our argument, we generalize the result of Reznikov to any smooth closed curve over general Riemannian surfaces. 
\begin{theorem}\label{thm3}
	Let $\gamma$ be a smooth closed curve on $(M,g)$. Given $0<c<1$, if $0\le\frac{\nu}{\la}<c<1$,  then we have
	\begin{equation}\label{period}
	\Big|\int_{\gamma}e_\lambda(\gamma(s)) e^{-i\nu s}\, ds\Big|\le C |\gamma|,
	\end{equation}
	where the constant $C$ only depends on $(M,g)$ and $c$.
	In addition, if $\mu>c^{-1}\la$, then we have
		\begin{equation}
	\Big|\int_{\gamma}e_\lambda(\gamma(s)) e^{-i\nu s}\, ds\Big|\le C_N \nu^{-N},
	\end{equation}
		for all $N\in\mathbb N.$
\end{theorem}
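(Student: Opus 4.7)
The plan is to adapt the reproducing-kernel and $T^*T$ machinery underpinning Theorem~\ref{thm1} by replacing the second eigenfunction along $\gamma$ with the unimodular oscillatory function $e^{-i\nu s}$. After a partition of unity, I would fix $\chi\in C_0^\infty$ supported on a short arc and let $\rho\in\mathcal{S}(\mathbb{R})$ be real and even with $\rho(0)=1$ and $\hat\rho$ supported in $(-\delta,\delta)$ for $\delta$ smaller than a fixed fraction of the injectivity radius of $M$. Since $\rho(\lambda-P)e_\lambda=e_\lambda$ with $P=\sqrt{-\Delta_g}$, the linear functional $T_\nu f=\int f(\gamma(s))e^{-i\nu s}\chi(s)\,ds$ satisfies $|T_\nu e_\lambda|\le \|T_\nu\rho(\lambda-P)\|_{L^2\to\mathbb{C}}$, and a routine $T^*T$ identity yields
\[
|T_\nu e_\lambda|^2\le\iint K(\gamma(s),\gamma(s'))\,\chi(s)\chi(s')\,e^{i\nu(s'-s)}\,ds\,ds',
\]
where $K$ is the Schwartz kernel of the self-adjoint operator $\rho^2(\lambda-P)$.

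By finite propagation speed and the short-time Hadamard parametrix for the half-wave operator, $K$ admits the local oscillatory-integral representation $K(x,y)=\int e^{i\langle x-y,\xi\rangle_{g_y}}q(x,y,\xi)\rho^2(\lambda-|\xi|_{g_y})\,d\xi+O(\lambda^{-N})$ in geodesic normal coordinates centered at $y$; stationary phase in $\xi$ yields the equivalent form $K(x,y)=\lambda^{1/2}\sum_\pm a_\pm(\lambda;x,y)e^{\pm i\lambda d_g(x,y)}d_g(x,y)^{-1/2}+O(\lambda^{-N})$ in the range $1/\lambda\lesssim d_g(x,y)\ll 1$, together with the trivial Weyl-type bound $|K(x,y)|\lesssim \lambda$ in the $1/\lambda$-neighborhood of the diagonal.

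For the first assertion, with $0\le\nu<c\lambda$, I would analyze the double integral by non-stationary phase in $u=s-s'$. Since $\gamma$ is parametrized by arc length, the Gauss lemma yields $|\partial_s d_g(\gamma(s),\gamma(s'))|\le 1$, with limit $\pm 1$ as $u\to 0^\mp$, so the phase $\pm \lambda d_g(\gamma(s),\gamma(s'))+\nu(s-s')$ has $u$-derivative of magnitude at least $\lambda-\nu\ge(1-c)\lambda$ on the non-degenerate branch (and at least $\lambda+\nu$ on the other). Splitting at $|u|\sim 1/\lambda$, the interior piece is handled by the trivial bound $|K|\lesssim\lambda$ over a set of measure $\sim 1/\lambda$, contributing $O(1)$; on the exterior piece, $N$ iterations of integration by parts in $u$ provide the gain $\lambda^{-N}$, which when combined with the $\lambda^{1/2}$ prefactor and the $|u|^{-1/2}$ amplitude singularity again produces an $O(1)$ contribution, so $\|T_\nu\rho(\lambda-P)\|^2=O(1)$ and \eqref{period} follows.

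For the second assertion, with $\nu>c^{-1}\lambda$, it is more efficient to keep the Fourier representation of $K$ and not execute the $\xi$ stationary phase. The resulting triple integral has phase $\langle\gamma(s)-\gamma(s'),\xi\rangle_{g_{\gamma(s')}}+\nu(s'-s)$, whose $s$-derivative is $\gamma'(s)\cdot\xi-\nu$; since $|\xi|\lesssim\lambda<c\nu$ on the effective support of $\rho^2(\lambda-|\xi|)$ and $|\gamma'(s)|_g=1$, this derivative has magnitude at least $(1-c)\nu$ uniformly in $s$ and $\xi$, so $N$ integrations by parts in $s$ produce $O(\nu^{-N})$, with the $O(\lambda^{n-1})$ loss from $d\xi$ absorbed by taking $N$ a little larger and using $\lambda<c\nu$. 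The main technical obstacle lies in the first part: one must carefully control the interplay between the $|u|^{-1/2}$ diagonal singularity and the repeated integration by parts, since each derivative of the amplitude produces a factor $|u|^{-1/2-N}$ that has to be weighed against the $\lambda^{-N}$ phase gain, and the boundary term at $|u|\sim 1/\lambda$ must be tracked carefully. A dyadic decomposition in $|u|\in[1/\lambda,1]$, or equivalently performing the $s$-integration by parts directly on the Fourier representation before collapsing the $\xi$-integral as in the second part, should resolve this cleanly.
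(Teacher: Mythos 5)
Your proposal is correct in its conclusions and in all the essential estimates, but it follows a genuinely different route from the paper. You set up a $T^*T$ argument with the kernel of $\rho^2(\lambda - P)$, use the short-time Hadamard parametrix (with $\hat\rho$ supported near the origin, so the kernel lives on a neighborhood of the diagonal and carries the $d_g(x,y)^{-1/2}$ singularity and the two branches $e^{\pm i\lambda d_g}$), and then argue by \emph{non-stationary} phase in $u = s-s'$, exploiting $|\partial_s d_g(\gamma(s),\gamma(s'))|$ being close to $1$ so that all branches have phase derivative $\gtrsim(1-c)\lambda$. The paper instead keeps a single application of the reproducing operator and chooses $\hat\rho$ supported in an annulus $(c_3\eps_0, c_4\eps_0)$, so the amplitude $a_\lambda(x,y)$ is supported in $c_1\eps_0 < d_g(x,y) < c_2\eps_0$: one clean phase $\lambda d_g(\gamma(s),x) - \nu s$, no diagonal singularity, no $\pm$ branches. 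There, the $s$-derivative $\cos\theta_x(s) - \nu/\lambda$ does vanish (since the target point $x$ is at distance $\approx\eps_0$ from $\gamma$, not on it), but the second derivative is bounded below uniformly when $\nu/\lambda < c < 1$, so \emph{stationary} phase yields $K(x) = O(\lambda^{-1/2})$ pointwise, and the $\lambda^{1/2}$ prefactor closes the argument. This is why the geometry of the two arguments is opposite: yours is non-stationary because both variables sit on $\gamma$, the paper's is stationary because one variable is off $\gamma$.

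Two concrete comparisons. First, the pointwise kernel bound in the paper's version immediately upgrades the right-hand side to $\|e_\lambda\|_{L^1(M)}$ (the Remark after Theorem~\ref{thm3}); the $T^*T$ route only gives $\|e_\lambda\|_{L^2}$. Second, your worry about ``the interplay between the $|u|^{-1/2}$ singularity and repeated integration by parts'' is real but resolves more easily than you suggest: because the phase derivative has a \emph{uniform} lower bound $\gtrsim\lambda$ (rather than merely $\gtrsim\lambda|u|$), a single integration by parts already suffices, since $\lambda^{-1}\int_{1/\lambda}^{1}\lambda^{1/2}|u|^{-3/2}\,du = O(1)$ and the boundary term at $|u|\sim\lambda^{-1}$ is also $O(1)$; no dyadic decomposition is needed. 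Your treatment of the $\nu > c^{-1}\lambda$ case, retaining the $\xi$-integral and integrating by parts in $s$ using $|\gamma'(s)\cdot\xi - \nu| \ge (1-c)\nu$, is essentially equivalent to the paper's rescaled phase $\tilde\phi = \epsilon^{-1}d_g - s$ with $\epsilon^{-1} < c$.
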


{\bf Remark:} It is pointed out to the author by Professor Sogge that our proof for Theorem \ref{thm3} actually gives that
	\begin{equation}\label{period''}
\Big|\int_{\gamma}e_\lambda(\gamma(s)) e^{-i\nu s}\, ds\Big|\le C |\gamma|\|e_\la\|_{L^1(M)},
\end{equation}
which,  in the case of $\nu=0$, improves Hejhal-Good-Zelditch's bound \eqref{i.1} which had $\|e_\la\|_{L^2(M)}$ on the right hand side. In addition, this bound \eqref{period''} is sharp for both zonal and highest weight spherical harmonics.

Our Theorem \ref{thm3} not only generalizes the result of \cite{Rez} to general Riemannian surfaces and general closed curves, but also provides a stronger bound in the sense that our $c$ can be chosen to be any number strictly less than 1 and independent of $\gamma$, and our bound only depends on the length of $\gamma$.

Note that on the round sphere $S^2$, the highest weight spherical harmonics of degree $\nu$ restricted to the equator looks exactly like $\nu^\frac14e^{i\nu s}$, which highlights both the difference and connection between \eqref{inner} and \eqref{period}.

The broad strategy of the proof of our theorems will be a combination of the ideas in the work of Chen and Sogge \cite{CSPer} and the tools established by Burq, G{\'e}rard and Tzvetkov \cite{BGT} and Bourgain \cite{Bourgainef}.  We shall need to refine the stationary phase type
arguments used for period integrals in \cite{CSPer} and \cite{Bourgainef}, and make use of the additional oscillatory factor to get a further improvement when there is a gap between the frequencies $\la$ and $\mu$. 

This paper is organized as follows.  In the next section we shall state some basic facts that will be needed for our proof, including a standard reduction with a reproducing operator.  In \S 3, we prove Theorem \ref{thm3} by proving its local counterpart, this proof also serves as a model case for the proof of Theorem \ref{thm2}, which is much more involved.
In \S 4 we prove \ref{thm2} by invoking an argument of Bourgain. In \S 5 we show that both Theorem \ref{thm1} and \ref{thm3} are sharp. In section \S 6, we sketch the proof of Theorem \ref{thm4}. Finally, in section \S 7, we discuss some related problems.

From now on, we shall assume that the injectivity radius of $(M,g)$ is at least 10. We shall always use the letter $c$ to denote a positive constant that is strictly less than 1, and use the letter $C$ to denote various positive constants which only depends on $(M,g)$ and $c$.

\textit{Acknowledgments.}
The author would like to thank Professor Christopher Sogge, Allan Greenleaf and Alex Iosevich for their constant support and mentoring. In particular, the author want to thank Professor Sogge for his constructive comments, and it is a pleasure for the author to thank Professor Greenleaf for many helpful conversations, and for suggesting a related problem. The author also wants to thank Professor Xiaolong Han for some helpful suggestions.

 \section{A standard reduction}
 We shall work in the normal coordinates about a point $\gamma(0)$ on the curve $\gamma,$ such that $\dot\gamma(0)=(1,0)$, and thus we may assume that $g_{ij}(\gamma(0))=\delta_{ij}$.
 
If we take  a real-valued function $\rho\in {\mathcal S}(\R)$ satisfying
$$\rho(0)=1 \quad \text{and } \, \, \hat \rho(\tau)=0 , \quad |\tau|\not\in(c_3\eps_0,c_4\eps_0),$$
then we have $\rho(\la-\sqrt{-\Delta_g})e_\la=e_\la,$ and thus  $\rho(\la-\sqrt{-\Delta_g})$ is called a {\it reproducing} operator. By a standard wave operator trick, we have the following lemma.
\begin{lemma}[{\cite[Lemma 5.1.3]{SFIO}}]\label{513}For the above function $\rho \in \mathcal{S}(\mathbb{R})$, if we work in a small fixed coordinate patch $W$, and $d_g(x,y)$ denotes the Riemannian distance function, then there exists a smooth function $a_\la(x,y)$,
	supported in the set 
	\begin{equation*}
	U=\{(x,y): c_1 \eps_0\leq d_g(x,y)\leq
	c_2\eps_0\ll 1 \}\, 
	\end{equation*}and such that
	$$  |\partial^\alpha_{x}\partial^\beta_ya_\la(x,y)|\leq C_{\alpha\beta}\quad\text{for all  }\alpha,\,\beta\ge0,$$
	for which we have that, for $x\in W$,
\begin{equation}
\rho(\la-\sqrt{-\Delta_g})f(y)=\la^\frac12\int e^{i\la d_g(x,y)}a_\lambda(x,y)f(y)dy+R_\lambda f,
\end{equation}
where the reminder operator $R_\lambda$ is smoothing, and
\begin{equation}
\|R_\lambda\|_{L^1\rightarrow L^\infty}\le C_N\lambda^{-N},\quad\text{for any }N.
\end{equation}
\end{lemma}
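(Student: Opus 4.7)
The plan is to expand $\rho(\la - \sqrtg)$ by Fourier inversion as
\begin{equation*}
\rho(\la - \sqrtg) = \tfrac{1}{2\pi}\int \hat\rho(t)\, e^{i\la t}\, e^{-it\sqrtg}\, dt,
\end{equation*}
so that the support assumption $\supp \hat\rho \subset \{|t|\in (c_3\eps_0, c_4\eps_0)\}$ confines the integrand to times that are bounded away from $0$ and well below the injectivity radius. For such times, the half-wave propagator admits a Hadamard/H\"ormander parametrix of the form $\int_{\R^2} e^{i(\phi(x,y,\xi) - t|\xi|_g)}\, q(t,x,y,\xi)\, d\xi$ plus a smoothing remainder, where $\phi$ is a classical phase generating the geodesic flow (so that $\phi(x,y,\xi) = d_g(x,y)\,|\xi|$ when $\xi$ points along the minimizing geodesic from $y$ to $x$) and $q$ is a classical symbol of order $0$.

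Plugging the parametrix in and passing to polar coordinates $\xi = r\omega$, I would first evaluate the $\omega$-integral over $S^1$ by stationary phase: the two non-degenerate critical points contribute a factor of $r^{-1/2}$ together with the oscillatory phase $e^{ir\, d_g(x,y)}$. The $t$-integral then equals $\rho(r - \la)$ up to a constant, which localizes $r$ to a Schwartz neighborhood of $\la$. Changing variables to $s = r - \la$ and Taylor-expanding the smooth factors around $r = \la$ produces, after the $s$-integration, the factor $\la^{1/2}$ (arising from $r^{(n-1)/2}$ with $n=2$) times an amplitude $a_\la(x,y)$ which is smooth in $(x,y)$ with derivatives bounded uniformly in $\la$. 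The support of $a_\la$ in the annulus $c_1\eps_0 \le d_g(x,y) \le c_2\eps_0$ is forced by the support of $\hat\rho$ combined with the stationary-phase relation $d_g(x,y) = |t|$ at the critical point; points $(x,y)$ outside this annulus yield a non-stationary phase in $t$ and contribute only to the remainder.

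The remainder $R_\la$ is a sum of two kinds of errors: the smoothing parametrix tail, and the subleading terms of the stationary-phase expansion on $S^1$. The first is handled by integration by parts in $t$ against $e^{i\la t}$, which uses the smoothness of $\hat\rho$ to generate arbitrary powers of $\la^{-1}$; the second is handled by carrying the stationary-phase expansion to any desired order, since each further term gains a factor of $r^{-1}$ and hence, after the $r$-integration, a factor of $\la^{-1}$. Together these give the $L^1 \to L^\infty$ bound $C_N \la^{-N}$ for every $N$. The main technical point, and what I would spend the most care on, is verifying that derivatives of $a_\la$ in $x$ and $y$ remain uniformly bounded: differentiation either lands on the smooth symbol $q$ or produces factors of $r$, which are absorbed by the Schwartz decay of $\rho$ and the rapid localization of $r$ near $\la$ coming from the $t$-integration.
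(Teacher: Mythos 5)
Your proposal is correct and takes essentially the same route the paper sketches (Fourier inversion, H\"ormander parametrix for the half-wave group, stationary phase in the angular variable, then $t$- and $r$-integrations), which is precisely the argument from \cite[Lemma 5.1.3]{SFIO} that the paper cites. The only small slip is in the support claim: for $(x,y)$ outside the annulus the non-stationary phase is in the radial variable $r$ (with phase $r(d_g(x,y)-t)$, where $t$ lies in $\supp\hat\rho$), not in $t$; the $t$-integral merely localizes $r$ near $\lambda$. This is a cosmetic misattribution and does not affect the validity of the argument.
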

We remark that $c_1, c_2, \eps_0$ only depend on the choice of $c_3, c_4$ for $\rho$. We can always choose $c_3$, $c_4$ to be arbitrarily small and $c_3/c_4$ arbitrarily close to 1, which allows us to assume $\epsilon_0$ to be arbitrarily small and both $c_1,\,c_2$ to be arbitrarily close to 1.

The proof of the lemma above is based on expressing the reproducing operator $\rho(\la-\sqrt{-\Delta_g})$ by a parametrix for the half wave
operator $
i\partial_{t}-\sqrt{-\Delta_g}
$.

Indeed, one may write
\begin{equation*}
\rho(\la-\sqrt{-{\Delta_g}})f
= 
\frac{1}{2\pi}\int\
e^{it\lambda}\,\hat{\rho}(t)\,
e^{-it\sqrt{-{\Delta_g}}}f
dt\,.
\end{equation*}
Assuming that the Fourier support of $\rho$ is small enough, 
one can use the H\"ormander parametrix to express $e^{-it\sqrt{-{\Delta_g}}}$ as a Pseudo-differential operator (See \cite[Chapter 4]{SFIO}). A stationary phase
argument (see \cite[Lemma 5.1.3]{SFIO}) then yields Lemma \ref{513}.

\section{Generalized periods: Proof of Theorem \ref{thm3}}We prove Theorem \ref{thm3} first, since it serves well as a simple model case.
It suffices to consider the integral
\begin{equation}\label{GP}
\int_\gamma\Big(\rho(\la-\sqrt{-\Delta_g})f\Big)(\gamma(s))e^{-i\nu s}\chi(s)\,ds,
\end{equation}
where $\chi(s)\in C_0^\infty((-\delta,\delta))$ is a fixed bump function which has small support, and $\nu<c\la$. If we can show that
\begin{equation}
\Big|\int_\gamma\Big(\rho(\la-\sqrt{-\Delta_g})f\Big)(\gamma(s))e^{-i\nu s}\chi(s)\,ds\Big|\le C\|f\|_{L^2(M)},
\end{equation}
by taking $f=e_\la$,  it follows that 
\begin{equation}
\Big|\int_\gamma e_\la\,e^{-i\nu s}\,\chi(s)\,ds\Big|\le C,
\end{equation}
and then by a standard partition of unity argument, Theorem \ref{thm3} follows.

Now, by Lemma \ref{513}, \eqref{GP} equals
\begin{equation}
\la^\frac12\iint e^{i\la d_g(\gamma(s),x)-i\nu s}a_\lambda(\gamma(s),x)\chi(s)f(x)\,dsdx+\int_\gamma R_\la f(\gamma(s))e^{-i\nu s}\chi(s)\,ds.
\end{equation}
Since the second term satisfies much better estimates, it remains to estimate
\begin{equation}
\la^\frac12\iint e^{i\la d_g(\gamma(s),x)-i\nu s}a_\lambda(\gamma(s),x)\chi(s)f(x)\,dsdx,
\end{equation}
where  $\supp \chi(s)\subset(-\delta,\delta)$, and the support of $a_\lambda(x,y)$  is contained in the set $\{(x,y):c_1\eps_0<d_g(x,y)<c_2\eps_0\}$. Here we fix $0<\delta\ll\eps_0\ll1$.  We have
\begin{equation}
|\partial_x^\alpha\partial_y^\beta a_\lambda(x,y)|\le C_{\alpha\beta},  \text{ for all }\alpha,\beta\ge0.
\end{equation}
 If we let $\alpha(s,x)=a_\lambda(\gamma(s),x)\chi(s),$ then it follows that
\begin{equation}
|\partial_x^\alpha\partial_s^\beta b(s,x)|\le C_{\alpha\beta}, \text{ for all }\alpha,\beta\ge0.
\end{equation}
Now we consider the kernel
\begin{equation}
K(s,x)=\int e^{i\la d_g(\gamma(s),x)-i\nu s}\alpha(s,x)\,ds,
\end{equation}
For the sake of simplicity, let $\epsilon=\frac\nu\la<c<1$, and denote 
\begin{equation}
\phi(s,x)=d_g(\gamma(s),x)-\epsilon s,
\end{equation}
then we may write
   \begin{equation}
K(s,x)=\int e^{i\la \phi(s,x)}\alpha(s,x)\,ds.
\end{equation}
Now consider a pair of points $(\gamma(s),z)\in\{(x,y):c_1\eps_0<d_g(x,y)<c_2\eps_0\}$. 
For such $\gamma(s), z$, we denote by $e_z$ the unit vector in $T_{\gamma(s)}M$ that generates the short
geodesic between $\gamma(s)$ and $z$, and write 
$\theta_z(s)  = \theta(\gamma(s), z)$ for the angle in $T_{\gamma(s)}M$ (measured
using the metric $g$) between the vector $e_z$ and the velocity vector $\gamma'(s)$ of $\gamma$ at $\gamma(s)$.
Since $(\gamma(s),z)\in\{(x,y):c_1\eps_0<d_g(x,y)<c_2\eps_0\}$, we know that $\gamma(s)$ and $z$ are at least  $\frac12 \eps_0$ distance apart, so $\theta_z(s)  = \theta(\gamma(s), z)$ is a smooth function of such $s$ and $z$. Then, it is clear that,
\begin{equation}
\frac{\partial \phi}{\partial s}(s,x)=\cos\theta_x(s)-\epsilon,
\end{equation}
which vanishes only if $\cos\theta(\gamma(s),x)=\epsilon$. At such a critical point $s_0$, we can see that
\begin{equation}\label{phi''}
\left|\frac{\partial^2 \phi}{\partial s^2}(s_0,x)\right|=\left|\sin\theta_x(s_0)\frac{d\theta_x}{d s}\right|,
\end{equation}
which has a non-trivial uniform lower bound independent of $\la$ and $\nu$, due to the fact that at $s_0$, $|\sin\theta_x(s_0)|>\sqrt{1-c^2}$, and $\left|\frac{d\theta_x}{d s}\right| =\frac1{d_g(\gamma(s),x)}\sin\theta_x+O(1)$. Therefore \eqref{phi''} is bounded below by a constant, if $\delta$ and $\eps_0$ are small enough.

Therefore, we can invoke the method of stationary phase to see that
   \begin{equation}
K(s,x)=O(\la^{-\frac12}),
\end{equation}
which shows that 
\begin{equation}
\la^\frac12\iint e^{i\la d_g(\gamma(s),x)-i\nu s}a_\lambda(\gamma(s),x)\chi(s)f(x)\,dsdx=O(1),
\end{equation}
finishing the proof of the first part of Theorem \ref{thm3}. 

For the second part,  we have $\epsilon=\frac\nu\la>c^{-1}>1$, and denote 
\begin{equation}
\tilde\phi(s,x)=\epsilon^{-1}d_g(\gamma(s),x)- s,
\end{equation}
then, in this case
\begin{equation}
K(s,x)=\int e^{i\nu \tilde\phi(s,x)}\alpha(s,x)\,ds,
\end{equation}
and
\begin{equation}
\left|\frac{\partial \tilde\phi}{\partial s}(s,x)\right|=|\epsilon^{-1}\cos\theta_x(s)-1|\ge 1-c
\end{equation}
Therefore, we may integrate by parts $N+1$ times to see that 
\begin{equation}
\la^\frac12\iint e^{i\la d_g(\gamma(s),x)-i\nu s}a_\lambda(\gamma(s),x)\chi(s)f(x)\,dsdx=O_N(\nu^{-N}),
\end{equation}
finishing the proof of Theorem \ref{thm3}
\section{Inner Product of Eigenfunctions over Curves: Proof of Theorem \ref{thm1} and \ref{thm2}}
It suffices to consider the integral
\begin{equation}\label{IP}
\int_\gamma\Big(\rho(\la-\sqrt{-\Delta_g})f\Big)(\gamma(s))\overline{\Big(\rho(\mu-\sqrt{-\Delta_g})g\Big)(\gamma(s))}\chi(s)\,ds,
\end{equation}
where $\chi(s)\in C_0^\infty((-\delta,\delta))$ is a fixed bump function which has small support, and $\nu<c\la$. If we can show that for $0\le\mu\le\la$,
\begin{multline}
\Big|\int_\gamma\Big(\rho(\la-\sqrt{-\Delta_g})f\Big)(\gamma(s))\overline{\Big(\rho(\mu-\sqrt{-\Delta_g})g\Big)(\gamma(s))}\chi(s)\,ds\Big|\\\le C\mu^{\frac12}\|f\|_{L^2(M)}\|g\|_{L^2(M)},
\end{multline}
and for $0\le\mu<c\la$, $0<c<1$,
\begin{multline}
\Big|\int_\gamma\Big(\rho(\la-\sqrt{-\Delta_g})f\Big)(\gamma(s))\overline{\Big(\rho(\mu-\sqrt{-\Delta_g})g\Big)(\gamma(s))}\chi(s)\,ds\Big|\\\le C\mu^{\frac14}\|f\|_{L^2(M)}\|g\|_{L^2(M)},
\end{multline}
then by taking $f=e_\la$, $g=e_\mu$, Theorem \ref{thm2} follows. Using a standard partition of unity argument, we can obtain Theorem \ref{thm1}.

Now, by Lemma \ref{513}, \eqref{IP} equals
\begin{multline}\label{4.4}
(\la\mu)^\frac12\iiint e^{i\la d_g(\gamma(s),x)-i\mu d_g(\gamma(s),y)}a_\lambda(\gamma(s),x)\overline{a_\mu(\gamma(s),y)}f(x)\overline{g(y)}\chi(s)\,dsdydx\\+\mu^\frac12\iint e^{-i\mu d_g(\gamma(s),y)}\overline{a_\mu(\gamma(s),y)}\overline{g(x)}R_\la f(\gamma(s))\chi(s)\,dsdy\\+\la^\frac12\iint e^{i\la d_g(\gamma(s),x)}a_\lambda(\gamma(s),x)f(x)\overline{R_\mu g(\gamma(s))}\chi(s)\,dsdx \\+\int R_\la f(\gamma(s))\overline{R_\mu g(\gamma(s))}\chi(s)\,ds.
\end{multline}
Clearly, the last term of \eqref{4.4} satisfies much better estimates. On the other hand, noticing that by Lemma \ref{513}, $R_\la f(\gamma(s)), R_\mu g(\gamma(s))\in C^\infty((-\delta,\delta))$ with bounded derivatives independent of $\la$ and $\mu$, we can see that by the proof in last section, the second and third term in \eqref{4.4} are bounded by $C\|f\|_{L^2(M)}\|g\|_{L^2(M)}$. Thus it remains to estimate the first term:
 \begin{equation}
 I=\la^\frac12\mu^\frac12\iiint e^{i\la d_g(\gamma(s),x)-i\mu d_g(\gamma(s),y)}a_\lambda(\gamma(s),x)\overline{a_\mu(\gamma(s),y)}f(x)\overline{g(y)}\chi(s)\,dsdydx,
 \end{equation}
 where  $\supp \chi(s)\subset(-\delta,\delta)$, and  support of both $a_\lambda(x,y)$ and $a_\mu(x,y)$ is contained in the set $\{(x,y):c_1\eps_0<d_g(x,y)<c_2\eps_0\}$. Here we fix $0<\delta\ll\eps_0\ll1$ which will be specified later in the proof.  We have
  \begin{equation}
|\partial_x^\alpha\partial_y^\beta a_\lambda(x,y)|\le C_{\alpha\beta} ,|\partial_x^\alpha\partial_y^\beta a_\mu(x,y)|\le C_{\alpha\beta}, \text{ for all }\alpha,\beta\ge0.
 \end{equation}
 
 If we let $b(s,x,y)=a_\lambda(\gamma(s),x)\overline{a_\mu(\gamma(s),y)}\chi(s),$ then it is clear that
   \begin{equation}
|\partial_x^\alpha\partial_y^\beta \partial_s^\kappa b(s,x,y)|\le C_{\alpha\beta \kappa}, \text{ for all }\alpha,\beta, \kappa\ge0.
 \end{equation}
Now let us consider the kernel
   \begin{equation}
K_0(x,y)=\int e^{i\la d_g(\gamma(s),x)-i\mu d_g(\gamma(s),y)}b(s,x,y)\,ds.
\end{equation}
We can write $I$ as
   \begin{equation}\label{IK}
I= \la^\frac12\mu^\frac12\iint K_0(x,y) f(x)\overline{g(y)}\,dydx,
   \end{equation}
As before, let $0<\epsilon=\frac\mu\la<c<1$, and denote 
   \begin{equation}
\Phi(s,x,y)=d_g(\gamma(s),x)-\epsilon d_g(\gamma(s),y),
\end{equation}
then we have
   \begin{equation}
K_0(x,y)=\int e^{i\la \Phi(s,x,y)}b(s,x,y)\,ds.
\end{equation}

Now consider a pair of points $(\gamma(s),z)\in\{(x,y):c_1\eps_0<d_g(x,y)<c_2\eps_0\}$. 
For such $\gamma(s)$ and $z$, we denote by $e_z$ the unit vector in $T_{\gamma(s)}M$ that generates the shortest
geodesic between $\gamma(s)$ and $z$, and write 
$\theta_z(s)  = \theta(\gamma(s), z)$ for the angle in $T_{\gamma(s)}M$ (measured
using the metric $g$) between the vector $e_z$ and the velocity vector $\gamma'(s)$ of $\gamma$ at $\gamma(s)$.
Since $(\gamma(s),z)\in\{(x,y):c_1\eps_0<d_g(x,y)<c_2\eps_0\}$, we know that $\gamma(s)$ and $z$ are at least  $c_1\eps_0$ distance apart, so $\theta_z(s)  = \theta(\gamma(s), z)$ is a smooth function of such $s$ and $z$. Then, it is clear that,
\begin{equation}
\frac{\partial \Phi}{\partial s}(s,x,y)=\cos\theta(\gamma(s),x)-\epsilon\cos\theta(\gamma(s),y).
\end{equation}

 \begin{figure}[!ht]
  \centering
    \includegraphics[width=0.8\textwidth]{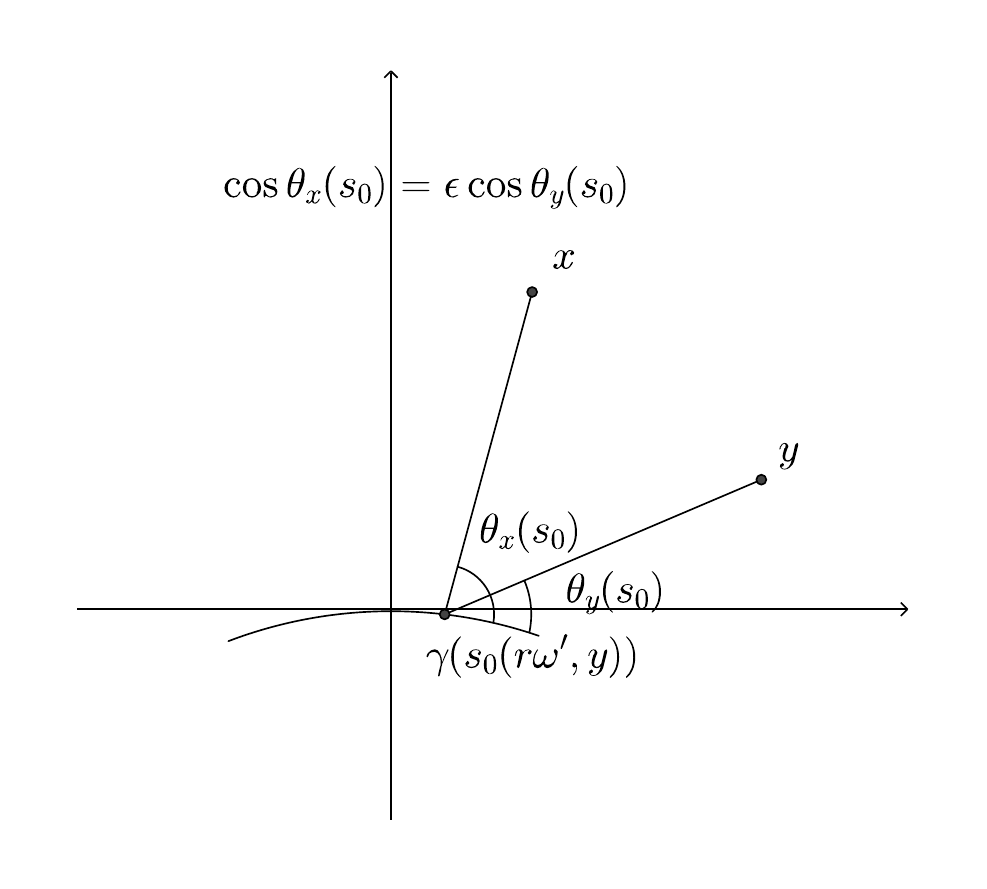}
      \caption{}
      \label{fig1}
\end{figure}
Without loss of generality, we assume in addition that for a pair of points $x,y\in\{z:c_1\eps_0<d_g(\gamma(0),z)<c_2\eps_0\}$, $s_0=s_0(x,y)$ is the only point in $(-2\delta,2\delta)$ such that $\partial_s\Phi(s,x,y)$ vanishes. (See Figure 1.) Indeed, if such a critical point does not exist, integration by parts yields $I=O_N(\lambda^{-N})$. It is then clear that, in this case, $s_0(x,y)$ is a smooth function of $x$ and $y$. On the other hand, at $s_0(x,y)$, we have
\begin{equation}\label{theeq}
\frac{\partial \Phi}{\partial s}(s_0(x,y),x,y)=\cos\theta_x(s_0(x,y))-\epsilon\cos\theta_y(s_0(x,y)=0,
\end{equation}
and 
\begin{equation}
\frac{\partial^2 \Phi}{\partial s^2}(s_0(x,y),x,y)=-\sin\theta_x\frac{d\theta_x}{d s}(s_0(x,y))+\epsilon\sin\theta_y\frac{d\theta_y}{d s}(s_0(x,y)).
\end{equation}

As we are assuming $g_{ij}(\gamma(0))=\delta_{ij}$, and $\delta$ and $\eps_0$ is sufficiently small, everything is close to being Euclidean, it is easy to see that
\begin{equation}\label{Eu}\left|\frac{d\theta_x}{d s}\right| \approx\footnote{Here we are off by  O(1) due to the (possible) geodesic curvature of $\gamma$, however, it is negligible since $d_g(\gamma(s),x)$ is assumed to be sufficiently small. }\frac1{d_g(\gamma(s),x)}\sin\theta_x,\end{equation}
and therefore,
\[\left|\frac{d\theta_x}{d s}(s_0(x,y))\right| \ge (c_2\eps_0)^{-1} \sin\theta_x,\ \left|\frac{d\theta_y}{d s}(s_0(x,y))\right| \le (c_1\eps_0)^{-1} \sin\theta_y.\]
Thus
\begin{align*}
\left|\frac{\partial^2 \Phi}{\partial s^2}(s_0(x,y),x,y)\right|&\ge \eps_0^{-1}(c_1   \sin^2\theta_x- c_2\epsilon  \sin^2\theta_y)
\\&=\eps_0^{-1}(c_2^{-1}(1-\cos^2\theta_x)- c_1^{-1}\epsilon (1-\cos^2\theta_y))\\
&=\eps_0^{-1}[(c_2^{-1}-c_1^{-1}\epsilon)-(c_2^{-1}\cos^2\theta_x-c_1^{-1}\epsilon\cos^2\theta_y)]
\\&=\eps_0^{-1}[(c_2^{-1}-c_1^{-1}\epsilon)-(c_2^{-1}\epsilon^2\cos^2\theta_y-c_1^{-1}\epsilon\cos^2\theta_y)]
\\&=\eps_0^{-1}[(c_2^{-1}-c_1^{-1}\epsilon)-\epsilon\cos^2\theta_y(c_2^{-1}\epsilon-c_1^{-1})]
\\&\ge\eps_0^{-1}(c_2^{-1}-c_1^{-1}c).
\end{align*}
Notice that for any given $0<c<1$, we can choose $0<c_1<c_2$ such that $c_1>c_2c$, hence $c_2^{-1}-c_1^{-1}c>c_0>0$, which implies that 
\begin{align}
\left|\frac{\partial^2 \Phi}{\partial s^2}(s_0(x,y),x,y)\right|>c_0\eps_0^{-1}.
\end{align}

Now we are in a position to invoke the stationary phase method, which yields
   \begin{equation}\label{stationary}
K_0(x,y)=\lambda^{-\frac{1}{2}}e^{i\la\Phi(s_0(x,y),x,y)}\tilde b(x,y),
\end{equation}
where 
\begin{equation}
 |\partial_x^\alpha\partial_y^\beta \tilde b(x,y)|\le C_{\alpha\beta}, \text{ for all }\alpha,\beta\ge0.
 \end{equation}
Then it is easy to see  \eqref{IK} and \eqref{stationary} that $|I|\le\mu^\frac12$. In fact,  further improvements can be obtained by looking at $I$ more closely and take advantage of the oscillating factor in \eqref{stationary}.

From now on, our proof will be somewhat similar to the proof  \cite[(1.1)]{BGT}, which used an argument that originated in the work on the local smoothing of Fourier integral operators by Mockenhaupt, Seeger and Sogge \cite{Mock}, and the work on Fourier integral operators with fold singularities by Greenleaf and Seeger \cite{Greenleaf}. Now if we write $x$ in polar coordinates, $x=r\omega$, then if we freeze the $r$ variable, by Cauchy-Schwarz inequality, we see that
\begin{align*}
   |I|^2&=\mu\left|\iiint e^{i\la\Phi(s_0(r\omega,y),r\omega,y)}\tilde b(r\omega,y) f(r\omega)\overline{g(y)}\,dydrd\omega\right|^2\\
   &\le\mu\left(\int\left|\iint e^{i\la\Phi(s_0(r\omega,y),r\omega,y)}\tilde b(r\omega,y) f(r\omega)\overline{g(y)}\,dyd\omega\right|dr\right)^2\\
   &\le\mu\int\left|\iint e^{i\la\Phi(s_0(r\omega,y),r\omega,y)}\tilde b(r\omega,y) f(r\omega)\overline{g(y)}\,dyd\omega\right|^2dr.
   \end{align*}
It then suffices to bound    
   \begin{equation}
II=\left|\iint e^{i\la\Phi(s_0(r\omega,y),r\omega,y)}\tilde b(r\omega,y) f(r\omega)\overline{g(y)}\,dyd\omega\right|^2,\end{equation}
uniformly in $r$. In fact
   \begin{multline}
   \quad II\le\mu\|g\|_{L^2(M)}^2\int\left|\int e^{i\la\Phi(s_0(r\omega,y),r\omega,y))} \tilde b(r\omega,y) f(r\omega)\,d\omega\right|^2dy\\
   =\mu\|g\|_{L^2(M)}^2\iiint e^{i\la[\Phi(s_0(r\omega,y),r\omega,y)-\Phi(s_0(r\omega',y),r\omega',y)]}\overline{\tilde b(r\omega',y)}\tilde b(r\omega,y)\\\quad\quad\quad\quad \times f(r\omega)\overline{f(r\omega')}\,d\omega d\omega'dy.
   \end{multline}
We claim that
   \begin{eqnarray}
\left|\iiint e^{i\la[\Phi(s_0(r\omega,y),r\omega,y)-\Phi(s_0(r\omega',y),r\omega',y)]}\tilde b(r\omega,y)\overline{\tilde b(r\omega',y)} f(r\omega)\overline{f(r\omega')}\,d\omega d\omega'dy\right|\nonumber\\\le C\mu^{-\frac{1}{2}}\|f\|^2_{L^2(d\omega)},\label{last}
\end{eqnarray}
   which implies $|I|\le C\mu^\frac14$, and thus implies  Theorem \ref{thm1} and \ref{thm2}.
   
In fact, if we denote
\[B_r(\omega,\omega',y)=\tilde b(r\omega,y)\overline{\tilde b(r\omega',y)},\]
and let
   \begin{equation}\label{3}
K_r(\omega,\omega')=\int e^{i\la[\Phi(s_0(r\omega,y),r\omega,y)-\Phi(s_0(r\omega',y),r\omega',y)]}B_r(\omega,\omega',y)\,dy.
   \end{equation}
Moreover, since from now on we are interested in oscillations at frequency $\mu$, it is convenient to write
\begin{align*}\label{4}
 \Psi_r(\omega,\omega',y)&=\epsilon^{-1}[\Phi(s_0(r\omega,y),r\omega,y)-\Phi(s_0(r\omega',y),r\omega',y)]\\&=\epsilon^{-1}d_g(\gamma(s_0),r\omega)- d_g(\gamma(s_0),y)-[\epsilon^{-1}d_g(\gamma(s_0),r\omega')- d_g(\gamma(s_0),y)].
\end{align*}
Then $K_r(\omega,\omega')$ takes the following form

   \begin{equation}\label{5}
   K_r(\omega,\omega')=\int e^{i\mu\Psi_r(\omega,\omega',y)}B_r(\omega,\omega',y)\,dy.
   \end{equation}
We shall need the following lemmas.
   \begin{lemma}[{\cite[Lemma 3.1]{BGT}}]\label{Gauss'} Using geodesic polar coordinates, write $y=\exp_0{r\omega}$, with $r>0$, $\omega\in S^1$. Assuming $g(0)=Id$, we have
	\begin{equation}\nabla_x d_g(x,y)|_{x=0}=\omega.
	\end{equation}
\end{lemma}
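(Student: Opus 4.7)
The statement is the usual Gauss lemma in geodesic polar coordinates, and the cleanest route is via the first variation of arc length. The plan is to evaluate $\nabla_x d_g(x,y)|_{x=0}$ by pairing it against an arbitrary tangent vector $v \in T_0 M$ and reading off the gradient from the resulting linear functional.

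First I would fix $r$ smaller than the injectivity radius at $0$, so that $y = \exp_0(r\omega)$ is joined to every nearby point $x_\tau := \exp_0(\tau v)$ by a unique short minimizing geodesic $\sigma_\tau$, depending smoothly on $\tau$; this is standard inside a totally normal neighborhood of $0$. In particular $L(\sigma_\tau) = d_g(x_\tau, y)$ for small $|\tau|$, so the directional derivative of $f(x) := d_g(x, y)$ in the direction $v$ at $x = 0$ equals $\tfrac{d}{d\tau} L(\sigma_\tau)|_{\tau = 0}$.

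Next, I would apply the first variation of arc length to the family $\sigma_\tau$: the interior integral vanishes because $\sigma_0$ is a geodesic, the boundary term at $t = r$ vanishes because $y$ is held fixed, and only the contribution at $t = 0$ survives. Combined with $\dot\sigma_0(0) = \omega$ and $g(0) = \mathrm{Id}$, this gives
$$df_0(v) = \tfrac{d}{d\tau} L(\sigma_\tau)\big|_{\tau = 0} = -\langle v, \omega \rangle_{g(0)} = -v \cdot \omega,$$
from which the asserted identity $\nabla_x d_g(x,y)|_{x=0} = \omega$ follows up to the sign convention inherited from \cite[Lemma 3.1]{BGT} (any such sign is immaterial for how the lemma is applied in \eqref{theeq} and the subsequent computation of $\partial_s \Phi$).

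The only obstacle is mild bookkeeping: confirming that $\sigma_\tau$ remains minimizing so that $L(\sigma_\tau) = d_g(x_\tau, y)$ identically for small $\tau$, and keeping track of the sign in the first variation formula. Both are routine within the injectivity radius, and both are already implicit in the near-Euclidean computation of $d\theta_x/ds$ at \eqref{Eu} earlier in the proof.
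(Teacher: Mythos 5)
Your argument is correct and complete; the paper itself omits the proof, noting only that the lemma is ``a simple consequence of Gauss' Lemma,'' and the first variation of arc length is the standard way to supply the missing details. It is essentially the same geometry as Gauss's Lemma rather than a genuinely different route, since first variation is precisely what shows geodesic spheres are orthogonal to radial geodesics, which is what makes $d_g(\cdot,y)$ a radial unit-gradient function near $y$.

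One substantive remark on the sign. Your computation yields $\nabla_x d_g(x,y)\big|_{x=0}=-\omega$, and that is the correct answer, as one checks immediately in the Euclidean model: $\nabla_x|x-y|\big|_{x=0}=-y/|y|=-\omega$. The $+\omega$ in the lemma's statement is a sign slip, not a convention. The paper carries this flipped sign consistently throughout (e.g.\ $\partial_s d_g(\gamma(s),x)=\cos\theta_x(s)$ with $\theta_x$ measured toward $x$, and $\nabla_z\Phi(0,x,y)=x'/|x|-\epsilon\,y'/|y|$ in \S 6), and it is harmless because only absolute values of differences of such gradients ever enter the estimates. You flag the discrepancy, which is good, but it is cleaner to say plainly that the lemma's sign is off than to attribute it to a convention in the cited source.
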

Lemma \ref{Gauss'} is a simple consequence of Gauss' Lemma, and we shall omit the proof here.
 \begin{lemma}\label{6} For the kernel $K_r(\omega,\omega')$, there exists a constant $C$ such that
   \begin{equation}|K_r(\omega,\omega')|\le C(1+\mu|\omega-\omega'|)^{-\frac12}
   \end{equation}
   \end{lemma}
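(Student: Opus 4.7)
The plan is to prove the pointwise estimate on $K_r$ using the envelope theorem, Gauss' lemma, and a combination of the trivial bound with non-stationary phase in $y$, in the spirit of the Burq-G\'erard-Tzvetkov argument for $L^2$-restriction.

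\textbf{Step 1 (Gradient formula via the envelope theorem).} Since $s_0(r\omega,y)$ is a critical point of $s\mapsto\Phi(s,r\omega,y)$, the envelope theorem gives
\[
\nabla_y\bigl[\Phi(s_0(r\omega,y),r\omega,y)\bigr]\,=\,\partial_y\Phi(s,r\omega,y)\bigr|_{s=s_0(r\omega,y)}\,=\,-\epsilon\,\nabla_y d_g\bigl(\gamma(s_0(r\omega,y)),y\bigr).
\]
By Lemma~\ref{Gauss'}, $\nabla_y d_g(\gamma(s),y)=n(s,y)$, the outward unit tangent at $y$ of the short geodesic from $\gamma(s)$ to $y$.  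Hence
\[
\nabla_y\Psi_r(\omega,\omega',y)\,=\,n\bigl(s_0(r\omega',y),y\bigr)-n\bigl(s_0(r\omega,y),y\bigr).
\]

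\textbf{Step 2 (Quantitative size of the gradient).}  Differentiating the critical point equation in $\omega$ and using the already-established lower bound $|\partial_s^2\Phi|\gtrsim\varepsilon_0^{-1}$, combined with the fact that $|\sin\theta_x|\gtrsim\sqrt{1-c^2}$ at any critical point, the implicit function theorem yields $|\partial_\omega s_0(r\omega,y)|\sim 1$.  Hence $|s_0(r\omega,y)-s_0(r\omega',y)|\sim|\omega-\omega'|$.  Combining this with the Euclidean-type identity $|\partial_s n(s,y)|\sim\sin\theta_y(s)/d_g(\gamma(s),y)$ (analogous to \eqref{Eu}), a Taylor expansion produces
\[
|\nabla_y\Psi_r(\omega,\omega',y)|\,\sim\,\sin\theta_y\bigl(s_0(r\omega,y)\bigr)\cdot|\omega-\omega'|,
\]
which vanishes only when $y$ lies on the geodesic through $\gamma(s_0)$ in the direction $\pm\gamma'(s_0)$.

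\textbf{Step 3 (Decomposition and optimization).}  For a parameter $\delta\in(0,1)$ to be chosen, let $\chi_\delta(y)$ be a smooth cutoff supported in $\{|\sin\theta_y|<2\delta\}$, identically $1$ on $\{|\sin\theta_y|<\delta\}$, with $|\nabla\chi_\delta|\lesssim\delta^{-1}$.  The support of $\chi_\delta$ is a tubular neighborhood of a geodesic with $2$D Lebesgue measure $\lesssim\delta$, so $|\int e^{i\mu\Psi_r}B_r\chi_\delta\,dy|\lesssim\delta$ by the trivial bound.  On the complement, $|\nabla_y\Psi_r|\gtrsim\delta|\omega-\omega'|$, so one application of integration by parts in the direction of $\nabla_y\Psi_r$ (tracking the $\delta^{-1}$ loss from $\nabla\chi_\delta$ together with the bound $|\nabla_y^2\Psi_r|\lesssim|\omega-\omega'|$) yields $|\int e^{i\mu\Psi_r}B_r(1-\chi_\delta)\,dy|\lesssim(\mu\delta|\omega-\omega'|)^{-1}$.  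Optimizing via $\delta=(\mu|\omega-\omega'|)^{-1/2}$ balances these two contributions and gives $|K_r(\omega,\omega')|\lesssim(\mu|\omega-\omega'|)^{-1/2}$.  Combining with the trivial $O(1)$ bound proves the lemma.

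The main obstacle is Step 3: the gradient $\nabla_y\Psi_r$ vanishes on the geodesic through $\gamma(s_0)$ tangent to $\gamma$, so a direct non-stationary phase argument fails, and one must run the size-vs-oscillation tradeoff captured by the cutoff $\chi_\delta$, carefully verifying that the first- and second-derivative estimates of $\Psi_r$ survive uniformly in $r,\omega,\omega'$.
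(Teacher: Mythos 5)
Your proof is correct and follows essentially the same route as the paper: the envelope theorem reduces $\nabla_y\Psi_r$ to a difference of unit vectors via Gauss' lemma, $|\partial_\omega s_0|\sim 1$ (using the uniform $|\partial_s^2\Phi|$ lower bound and $|\sin\theta_x|\gtrsim\sqrt{1-c^2}$ at critical points) gives $|\nabla_y\Psi_r|\gtrsim|\omega-\omega'|\,|\sin\theta_y|$, and one then trades the trivial bound near the degenerate direction against a single integration by parts away from it. The only difference is cosmetic: the paper realizes this tradeoff by a dyadic decomposition in $|y_2|$ into shells $\Omega_k$ and sums the resulting geometric series, whereas you use a single smooth cutoff $\chi_\delta$ at scale $\delta$ and optimize $\delta=(\mu|\omega-\omega'|)^{-1/2}$; these are standard interchangeable implementations and yield the same $(1+\mu|\omega-\omega'|)^{-1/2}$ bound.
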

Let us postpone the proof of  Lemma \ref{6}, and see that how to use it to  prove \eqref{last}, finishing the proof of our main theorem.
\begin{proof}[Proof of \eqref{last}]

   \begin{align*}\label{7}
&\ \ \left|\iiint e^{i\la[\Phi(s_0(r\omega,y),r\omega,y)-\Phi(s_0(r\omega',y),r\omega',y)]}\tilde b(r\omega,y)\overline{\tilde b(r\omega',y)} f(r\omega)\overline{f(r\omega')}\,d\omega d\omega'dy\right|\\
&=\left|\iint K_r(\omega,\omega')f(r\omega)\overline{f(r\omega')}\,d\omega d\omega' \right|\\
&\le C\|f\|_{L^2(d\omega)}\Big\|\int (1+\mu|\omega-\omega'|)^{-\frac12} f(r\omega')d\omega' \Big\|_{L^2(d\omega)}\\
&\le C\|f\|_{L^2(d\omega)}^2\Big\|\int (1+\mu|\omega-\omega'|)^{-\frac12} d\omega' \Big\|_{L^1(d\omega)}\\
&\le C\mu^{-\frac12}\|f\|_{L^2(d\omega)}^2.
\end{align*}
Here on the second to the last line we used Hardy-Littlewood-Sobolev inequality.
	\end{proof}
Now it remains to prove Lemma \ref{6}. We shall employ the strategy in \cite{Bourgainef}.
\begin{proof}[Proof of Lemma \ref{6}]
We need to estimate
   \begin{equation}\label{8}
K_r(\omega,\omega')=\int e^{i\mu\Psi_r(\omega,\omega',y)}B_r(\omega,\omega',y)\,dy.
\end{equation}

Recall that 
\begin{equation}\label{4}
\quad\Psi_r(\omega,\omega',y)=\epsilon^{-1}[\Phi(s_0(r\omega,y),r\omega,y)-\Phi(s_0(r\omega',y),r\omega',y)],
\end{equation}
and
\begin{equation}
\epsilon^{-1}\Phi(s_0(r\omega,y),r\omega,y)=\epsilon^{-1}d_g(\gamma(s_0(r\omega.y)),r\omega)- d_g(\gamma(s_0(r\omega,y)),y).
\end{equation}
Since $s_0(r\omega,y)$ solves the equation $\frac{\partial \Phi}{\partial s}(s_0(r\omega,y),r\omega,y)=0$, it follows that
\begin{equation}
\epsilon^{-1}\nabla_y\Phi(s_0(r\omega,y),r\omega,y)
=-\nabla_yd_g(\gamma(s_0),y),
\end{equation}
thus
\begin{equation}
\nabla_y\Psi_r(\omega,\omega',y)=-\nabla_yd_g(\gamma(s_0(r\omega,y)),y)+\nabla_yd_g(\gamma(s_0(r\omega',y)),y).
\end{equation}
 
\begin{figure}[!ht]
  \centering
    \includegraphics[width=1\textwidth]{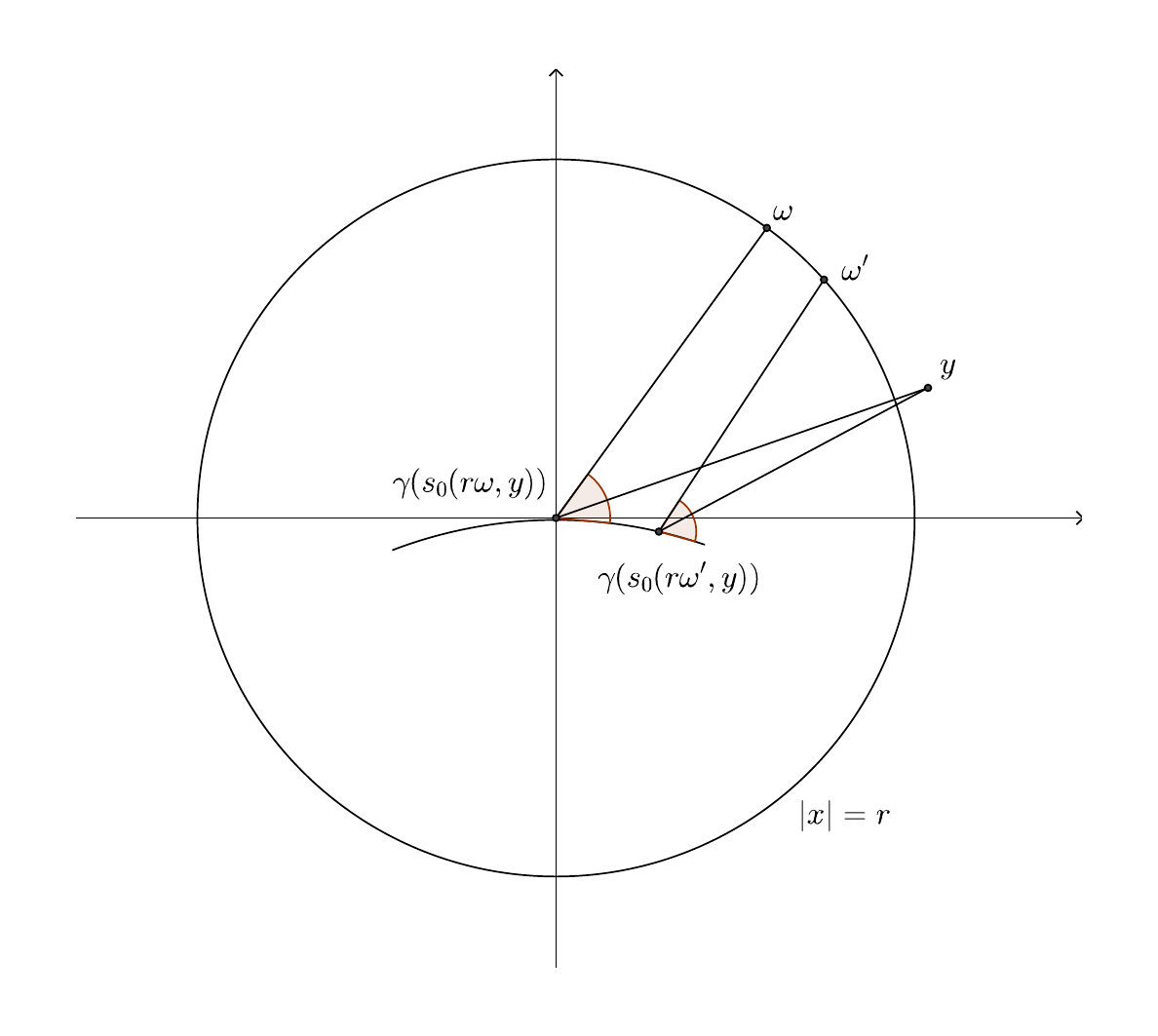}
      \caption{}
      \label{fig2}
\end{figure}
Applying Lemma \ref{Gauss'} with $0$ replaced by $y$, we see that (see Figure 2)
\begin{align}
|\nabla_y\Psi_r(\omega,\omega',y)|&=|\nabla_yd_g(\gamma(s_0(r\omega,y)),y)-\nabla_yd_g(\gamma(s_0(r\omega',y)),y)|\label{2d}\\
&=|\exp_y(\gamma(s_0(r\omega,y)))-\exp_y(\gamma(s_0(r\omega',y))|\nonumber\\
&\sim |\gamma(s_0(r\omega,y))-\gamma(s_0(r\omega',y))||y_2|\nonumber\\
&\gtrsim |\omega-\omega'||y_2|\nonumber,
\end{align}
here $y=(y_1,y_2)$, and we are using the fact that $\frac{\partial s_0}{\partial \omega}$ is bounded away from $0$, since $\omega$ is bounded away from 0 and $\pi$. Indeed, by differentiating both sides of \eqref{theeq} with respect to $\omega$, we see that
\[\sin\theta_{r\omega}(s_0)\frac{d\theta}{d\omega}(s_0,r\omega)+\sin\theta_{r\omega}(s_0)\frac{d\theta_{r\omega}}{ds_0}\frac{ds_0}{d\omega}+\epsilon\sin\theta_{y}(s_0)\frac{d\theta_y}{ds_0}\frac{ds_0}{d\omega}=0.\]
Recall that by \eqref{Eu}, $$\left|\frac{d\theta_x}{d s}\right| \approx\frac1{d_g(\gamma(s),x)}\sin\theta_x.$$ Therefore, by choosing $\delta, \eps_0$ small enough and $c_1, c_2$ sufficiently close to 1, we have
\[\Big|\frac{ds_0}{d\omega}\Big|\approx\frac1r|\sin\omega(\sin^2\theta_{r\omega}(s_0)\pm\eps\sin^2\theta_y(s_0))^{-1}|\ge\frac1r \sin\omega(1-\epsilon+(\epsilon^{-1}-1)\cos^2\theta_x),\]
which is clearly bounded away from 0 if $\omega$ is bounded away from $0, \pi$, and $\epsilon<c<1$.
 
Now if we let $\Omega_k$ denotes the set of $y$ such that $\dist(y,\{(z_1,z_2):z_2=0\})\sim 2^{-k} $, that is
\begin{equation}\label{omega_k}
\Omega_k=\{(y_1,y_2)\in U:2^{-k-1}<|y_2|\le 2^{-k}\},
\end{equation}
then integration by parts once yields the following bound:
   \begin{equation}\label{9}
\Big|\int_{\Omega_k} e^{i\mu\Psi_r(\omega,\omega',y)}B_r(\omega,\omega',y)\,dy\Big|\le C\min\Big\{\frac{2^k}{\mu|\omega-\omega'|},2^{-k}\Big\}.
\end{equation}
Therefore,
   \begin{align*}\label{9}
|K_r(\omega,\omega')|&\le\sum_{k=1}^\infty\min\Big\{\frac{2^k}{\mu|\omega-\omega'|},2^{-k}\Big\}\\
&=\sum_{k=1}^{[\log_4(\mu|\omega-\omega'|)]}\frac{2^k}{\mu|\omega-\omega'|}+\sum_{k=[\log_4(\mu|\omega-\omega'|)]+1}^{\infty}2^{-k}\\
&\le C(1+\mu|\omega-\omega'|)^{-\frac12}.
\end{align*}		\end{proof}

\section{Sharpness of Theorem \ref{thm1} and \ref{thm3}}
In this section, we shall discuss the sharpness of our results. We shall show that all of our results are sharp on $S^2$, and Theorem \ref{thm3} is also sharp on the flat torus $\mathbb T^2$. 
\subsection{The Sphere $S^2$}  To fix our notation, we consider $M = {S}^2$ with
the standard parametrization:
\[
(x, y, z) = ( \sin \theta \cos \varphi, \sin \theta \sin \varphi, \cos
\theta),
\]
where $0 \leq \theta \leq \pi$ is the angle from the north pole $(0,0,1)$ and $0
\leq \varphi \leq 2 \pi$ is the angle in the $xy$-plane measured
from the $x$-axis.  The induced metric is the usual spherical
metric:
\[
g = d \theta^2 + \sin^2 \theta d \varphi^2,
\]
and the volume form is
\[
dV_g = \sin \theta d \theta d \varphi.
\]
The Laplacian is the usual angular part of the polar Laplacian:
\[
-\Delta_g = -\frac{1}{\sin \theta} \partial_\theta \sin \varphi \partial_\theta -
\frac{1}{\sin^2 \theta } \partial_\varphi^2.
\]
The eigenfunctions for $-\Delta_g$ are homogeneous harmonic
polynomials in $\mathbb R^3$ restricted to $M$, with a standard $L^2(S^2)$-orthonormal basis $Y_\ell^m(\theta,\varphi)$ of the form:
\begin{equation}Y_\ell^m(\theta , \varphi ) =  \sqrt{(2\ell+1) \frac{(\ell-m)!}{ (\ell+m)!}} \, P_\ell^m ( \cos{\theta} )\, e^{i m \varphi},\end{equation}
where $P_\ell^m$ is the associated Legendre polynomials.
As an eigenfunction, $Y_\ell^m$ has eigenvalue $\sqrt{\ell(\ell+1)}$, and thus satisfies
\[
-\Delta_g Y_\ell^m = \ell(\ell+1) Y_\ell^m.
\]
For the special case when $m=\ell$, $Y_m^m(\theta,\varphi)$ is called the highest weight spherical harmonics, and is equal to $N_m e^{im\varphi}$ on the equator $\theta=\pi/2$, with a normalizing factor $N_m\sim m^\frac14$.

On the other hand, if both $\ell$ and $m$ are even and $m< c\ell$ for some $0<c<1$, then  the restriction of 
$Y_\ell^m(\theta,\varphi)$ to the equator is of the form
\begin{equation}\label{sh}Y_\ell^m(\pi/2 , \varphi ) =  \sqrt{{(2\ell+1) }{\frac{(\ell-m)!}{ (\ell+m)!}}} \, P_\ell^m (0)\, e^{i m \varphi },\end{equation}
where $P_\ell^m(0)$ can be computed inductively. One has
\[P_\ell^m(0)=\frac{(\ell-1)!!}{(\ell)!!}\frac{(\ell+m)!!}{(\ell-m)!!},\]
where $({}\cdot{})!!$ is the standard double factorial. A simple calculation shows that 
	\[\frac{(\ell-1)!!}{(\ell)!!}\sim \ell^{-\frac12},\quad\text{as }\ell\rightarrow\infty.\]
Therefore,  \eqref{sh} can be written as,
\begin{equation}Y_\ell^m( \pi/2 , \varphi ) =  \sqrt{{(2\ell+1) }{\frac{(\ell-m)!}{ (\ell+m)!}}} \,\frac{(\ell-1)!!}{(\ell)!!}\frac{(\ell+m)!!}{(\ell-m)!!}\, e^{i m \varphi },\end{equation}

Now the inner product of $Y_\ell^m(\pi/2,\varphi)$ with $Y_m^m(\pi/2,\varphi)$ over the equator $\gamma$ is the following:
\begin{align*}\int_\gamma Y_\ell^m(\pi/2,\varphi) 
\overline{Y_m^m(\pi/2,\varphi)}\,d\varphi&=N_m\, \sqrt{{(2\ell+1) }{\frac{(\ell-m)!}{ (\ell+m)!}}} \,\frac{(\ell-1)!!}{(\ell)!!}\frac{(\ell+m)!!}{(\ell-m)!!}\\
&\sim m^\frac14 \sqrt{\frac{(\ell-m)!}{ (\ell+m)!}} \frac{(\ell+m)!!}{(\ell-m)!!}\\
&=m^\frac14 \sqrt{\frac{(\ell-m)!}{ (\ell+m)!} \frac{[{(\ell+m)!!}]^2}{[{(\ell-m)!!}]^2}},
\end{align*}
where the term under the square root is
\begin{align*}{\frac{(\ell-m)!}{ (\ell+m)!}} \frac{[{(\ell+m)!!}]^2}{[{(\ell-m)!!}]^2}&=\prod\limits_{k=1}^{m}\frac{(\ell-m+2k)^2}{(\ell-m+2k-1)(\ell-m+2k)}\\
&=\prod\limits_{k=1}^{m}\frac{\ell-m+2k}{\ell-m+2k-1}>1.	\end{align*}
Indeed, the above quantity will also be $\lesssim_c1$ if $m<c\ell$, since 
\begin{equation*}\prod\limits_{k=1}^{m}\frac{\ell-m+2k}{\ell-m+2k-1}=\frac{\ell+m}{\ell-m+1}\prod\limits_{k=2}^{m}\frac{\ell-m+2k-2}{\ell-m+2k-1}<\frac{\ell+m}{\ell-m+1}<\frac2{1-c}.	\end{equation*}
and left hand side goes to $\ell^\frac14$ as $c\rightarrow 1$.
To sum up, we just proved that
\begin{equation}\Big|\int_\gamma Y_\ell^m(\pi/2,\varphi) 
\overline{Y_m^m(\pi/2,\varphi)}\,d\varphi\Big|\sim m^\frac14,\end{equation}
if $m<c\ell$, which shows that both bounds in Theorem \ref{thm1} are saturated on the sphere.
Similarly
\begin{equation}\Big|\int_\gamma Y_\ell^m(\pi/2,\varphi) e^{-im\varphi}\,d\varphi\Big|\sim 1,\end{equation}
if  $m<c\ell$, which indicates that Theorem \ref{thm3} is also sharp on the sphere.

\subsection{The Flat Torus $\mathbb T^2$}
It should be pointed out that estimates in Theorem \ref{thm1} are not optimal in the case of the
flat torus $\mathbb T^2$. In this case we have a strong improvement
of the Weyl bound on the $L^{\infty}$ norm of the eigenfunctions, in the sense that every eigenfunction is essentially bounded in this case. 
Indeed, in this case, we have the following explicit representations of
eigenfunctions:
\begin{equation}\label{rep}
e_{\lambda}(x,y)=\sum_{m^2+n^2=\lambda^2}a_{mn}e^{i(mx+ny)},
\end{equation}
thus by the Plancherel identity and the estimate for lattice points:
$$\#\big( (m,n)\in \mathbb Z^2\, : \,
m^2+n^2=\lambda^2\big)\leq C_{\varepsilon}(1+\lambda)^{\varepsilon},\quad \varepsilon>0,$$
we see that
\begin{equation}
\|e_{\lambda}\|_{L^{\infty}(\mathbb T^2)}\leq
C_{\varepsilon}(1+\lambda)^{\varepsilon}\|e_{\lambda}\|_{L^2(\mathbb T^2)}\, .
\end{equation}
Therefore, for every curve $\gamma$ on $\mathbb T^2$, we have the bound
\begin{equation}
\Big|\int_{\gamma}e_\la\,\overline{e_\mu}\,ds\Big|\leq C_{\varepsilon}(1+\lambda\mu)^{\varepsilon}
\|e_{\lambda}\|_{L^2(\mathbb T^2)}\|e_{\mu}\|_{L^2(\mathbb T^2)}\,,
\end{equation}
which is in general better than Theorem \ref{thm1}. It would be interesting to find a sharp bound in this special case.

On the other hand,  the generalized periods estimate in Theorem \ref{thm3} is sharp on $\mathbb T^2$. if we consider the restriction of the eigenfunction $e^{i\la x}$ to the geodesic $y=\tan\theta x\mod 2\pi$. Then for suitable $\la$ and $\theta$, if we set $\nu=\la\cos\theta$, then
\begin{equation}
\Big|\int_{\gamma}e^{i\la\cos\theta s}\,e^{-i\nu s}\,ds\Big|\sim 1\,,
\end{equation} Therefore Theorem \ref{thm3} is also  sharp on the torus $\mathbb T^2$.

\section{Generalization to Higher Dimensions: Inner product of Eigenfunctions over Hypersurfaces}
In this section, we sketch the proof of Theorem \ref{thm4}, which is parallel to the proof of Theorem \ref{thm1} for the 2-dimensional case.  We shall work in the normal coordinates about a point $H(0)$ on the hypersurface $H,$ such that the normal vector at $H(0)$ is equal to $e_1=(1,\vec 0)$. We shall parametrize $H$ naturally by the variable $z\in \mathbb R^{d-1}$ which is the projection of $H(z)$ onto the hyperplane $\{(x_1,x_2,\ldots,x_n):x_1=0\}$.

We shall need the following higher dimensional analog of Lemma \ref{513} which is also in \cite[Lemma 5.1.3]{SFIO}):
	\begin{equation}
	\rho(\la-\sqrt{-\Delta_g})(x,y)=\la^\frac{d-1}2e^{i\la d_g(x,y)}a_\lambda(x,y),
	\end{equation}
modulo a smoothing operator. The amplitude $a_\la(x,y)$ is supported in the set
	\begin{equation*}
U=\{(x,y): c_1 \eps_0\leq d_g(x,y)\leq
c_2\eps_0\ll 1 \}\, 
\end{equation*} and satisfies that
$$  |\partial^\alpha_{x}\partial^\beta_ya_\la(x,y)|\le C_{\alpha\beta}\quad\text{for all  }\alpha,\,\beta\ge0.$$ 
If we denote $b(z,x,y)=a_\lambda(H(z),x)\overline{a_\mu(H(z),y)}\chi(z),$ where $\chi(z)$ is a smooth bump function supported in the ball $B^{d-1}(0,\delta)$,
then it suffices to estimate the integral:
\begin{equation}\label{IK'}
I=\la^\frac{d-1}2\mu^\frac{d-1}2\iiint e^{i\la d_g(H(z),x)-i\mu d_g(H(z),y)}b(z,x,y)f(x)\overline{g(y)}\,dzdydx.
\end{equation}
If we let $\Phi(z,x,y)=d_g(H(z),x)-\epsilon d_g(H(z),y),$ with $0<\epsilon=\mu/\la<c$ as before, then it suffices to estimate the kernel
   \begin{equation}
K_1(x,y)=\int e^{i\la \Phi(z,x,y)}b(z,x,y)\,dz.
\end{equation}
Meanwhile, by Lemma \ref{Gauss'}, if we write $x=(x_1,x')$ and $y=(y_1,y')$, then
\begin{equation}
\nabla_z\Phi(0,x,y)=\frac{x'}{|x|}-\epsilon\frac{y'}{|y|},
\end{equation}
and
\begin{equation}
\nabla_z\Phi(z,x,y)=\frac{x'-z}{|x|}-\epsilon\frac{y'-z}{|y|}+O(|z|),
\end{equation}
which vanishes if \begin{equation}\label{z_0}\frac{x'-z}{|x|}=\epsilon\frac{y'-z}{|y|}+O(|z|).\end{equation}
On the other hand, if we let $z_0(x,y)$ denote the point that $\nabla_z\Phi(z_0,x,y)=0$, then
\begin{equation}\label{z_02}
\nabla^2_z\Phi(z_0,x,y)=-\frac{1}{|x|}I_{(n-1)\times(n-1)}+\epsilon\frac{1}{|y|}I_{(n-1)\times(n-1)}+O(1).
\end{equation}
On account of \eqref{z_0}, it is clear that the determinant of $\nabla^2_z\Phi(z_0,x,y)$ is bounded below by an absolute constant, providing that $\eps_0$ is small enough and $c_1,\,c_2$ are sufficiently close to 1. Therefore, without loss of generality, we may assume that $z_0(x,y)$  is the only critical point of $\Phi$ in $B^{d-1}(0,2\delta)$. Indeed, if such a critical point does not exist, integration by parts yields much better bounds. Note also that $z_0(x,y)$ will be a smooth function of $x$ and $y$. Now the method of stationary phase  yields that
\begin{equation}\label{stationary'}
K_1(x,y)=\lambda^{-\frac{n-1}{2}}e^{i\la\Phi(z_0(x,y),x,y)}\tilde b(x,y),
\end{equation}
Now  let us write $x$ in polar coordinates, $x=r\omega$ with $\omega\in S^{n-1}$. If we freeze the $r$ variable, and use Cauchy-Schwarz inequality,  we have
\begin{align*}
|I|^2&=\mu^{n-1}\left|\iiint e^{i\la\Phi(z_0(r\omega,y),r\omega,y)}\tilde b(r\omega,y) f(r\omega)\overline{g(y)}\,dydrd\omega\right|^2\\
&\le\mu^{n-1}\int\left|\iint e^{i\la\Phi(z_0(r\omega,y),r\omega,y)}\tilde b(r\omega,y) f(r\omega)\overline{g(y)}\,dyd\omega\right|^2dr\\
&\le\mu^{n-1}\|g\|_{L^2(M)}^2\int\left|\int e^{i\la\Phi(z_0(r\omega,y),r\omega,y))} \tilde b(r\omega,y) f(r\omega)\,d\omega\right|^2dy.
\end{align*}
It then suffices to bound    
\begin{equation}
II=\iiint e^{i\la[\Phi(z_0(r\omega,y),r\omega,y)-\Phi(z_0(r\omega',y),r\omega',y)]}\overline{\tilde b(r\omega',y)}\tilde b(r\omega,y)f(r\omega)\overline{f(r\omega')}\,d\omega d\omega'dy.\end{equation}
uniformly in $r$.

For the sake of simplicity, let
\[B_r(\omega,\omega',y)=\tilde b(r\omega,y)\overline{\tilde b(r\omega',y)},\]
and 
\begin{align*}
\Psi_r(\omega,\omega',y)&=\epsilon^{-1}[\Phi(z_0(r\omega,y),r\omega,y)-\Phi(z_0(r\omega',y),r\omega',y)]\\&=\epsilon^{-1}d_g(H(z_0),r\omega)- d_g(H(z_0),y)-[\epsilon^{-1}d_g(H(z_0),r\omega')- d_g(H(z_0),y)].
\end{align*}
It suffices to consider the kernel:
   \begin{equation}\label{5'}
K_r(\omega,\omega')=\int e^{i\mu\Psi_r(\omega,\omega',y)}B_r(\omega,\omega',y)\,dy.
\end{equation}
For the sake of simplicity, fixing $\omega$ and $y$, we may assume $z_0(r\omega,y)=0$, at the cost of a smooth change of variable. In order to reduce to the 2-dimensional case, let us fix a direction $e_2$ on the hyperplane $\{y=(y_1,y'):y_1=0\}$, say $(0,1,0,\ldots,0)$, and  consider the integral of $K_r$ in $y$ over the 2-plane $Y$ spanned by $e_1$ and $e_2$:
   \begin{equation}
K_r^Y(\omega,\omega')=\int e^{i\mu\Psi_r(\omega,\omega',(y_1,y_2,0,\ldots,0))}B_r(\omega,\omega',(y_1,y_2,0,\ldots,0)))\,dy_1dy_2.
\end{equation}
Now if we let $\omega^*$, ${\omega'}^*$ the projections of $\omega$, $\omega'$ onto $Y$ respectively, and $\omega^\perp$ ${\omega'}^\perp$ the projections of $\omega$ on to $Y^\perp$, then by the same argument as in \S 4, we see that
   \begin{equation}
|K_r^Y(\omega,\omega')|\le C_N(1+\mu|\omega^*-{\omega'}^*|)^{-\frac12}(1+\mu|\omega^\perp-{\omega'}^\perp|)^{-N},
\end{equation}
where the decay coming from the perpendicular direction is due to the fact that $$\partial_y\Psi_r(\omega,\omega',(y_1,y_2,0,\ldots,0))=-\nabla_yd_g(H(z_0(r\omega,y)),y)+\nabla_yd_g(H(z_0(r\omega',y)),y).$$ has absolute value bounded below by $|\omega^*-{\omega'}^*||y_1|+|\omega^\perp-{\omega'}^\perp|, $\footnote{Indeed, in our coordinates, $\omega^\perp$=0, and therefore  $|\omega^*-{\omega'}^*|=|\omega-{\omega'}^*|$, and $|\omega^\perp-{\omega'}^\perp|=|{\omega'}^\perp|$.} where, unlike \eqref{2d}, the second term is independent of $|y_1|$.
Therefore, if $\int dY$ denotes the integral over the $n-2$ dimensional family of 2-planes, we have
   \begin{align*}\label{7'}
&|II|=\left|\iiint K^Y_r(\omega,\omega')f(r\omega)\overline{f(r\omega')}\,d\omega d\omega' dY \right|\\
&\le C\|f\|_{L^2(d\omega)}\Big\|\iint (1+\mu|\omega^*-{\omega'}^*|)^{-\frac12}(1+\mu|\omega^\perp-{\omega'}^\perp|)^{-N} f(r\omega')\,d\omega'dY \Big\|_{L^2(d\omega)}\\
&\le C\|f\|_{L^2(d\omega)}^2\Big\|\int (1+\mu|\omega^*-{\omega'}^*|)^{-\frac12}(1+\mu|\omega^\perp-{\omega'}^\perp|)^{-N} d\omega' \Big\|_{L^1(d\omega dY)}\\
&\le C\mu^{-(n-2)-\frac12}\|f\|_{L^2(d\omega)}^2,
\end{align*}
which implies that $|I|\le\mu^\frac12\|g\|_{L^2(M)}^2\|f\|_{L^2(M)}^2$, finishing the proof of Theorem \ref{thm4}.
\section{Further Comments} 
\subsection{Inner Product of Eigenfunction over Subdomain}
Another closely related problem, suggested by Professor Allan Greenleaf, is to prove an quasi orthogonality type result for the restriction of eigenfunctions to a subdomain $\Omega\subset M$ with smooth boundary $\partial \Omega$, i.e., estimating the integral
\begin{equation}
\label{problem9}
\int_{\Omega}e_\lambda \overline{e_\mu}\, dx.
\end{equation}
This problem originated from engineering applications, which all had a simple idea behind them:  in a fixed subdomain, as the frequencies $\lambda$, $\mu$  go to infinity, in principle, the  two restricted eigenfunctions $e_\lambda|_\Omega$ and $e_\mu|_\Omega$ should show some degree of orthogonality, due to their quick oscillations. In the special case that both $e_\lambda$ and $e_\mu$ vanishes on the boundary of $\partial\Omega$, they are  perfectly orthogonal over $\Omega$ if $\lambda\neq \mu$. Indeed, in this special case, 
$e_\lambda|_\Omega$ and $e_\mu|_\Omega$ are Dirichlet eigenfunctions over $\Omega$.

A simple observation is that on $\mathbb T^2$, if we take $\mu=\lambda+2k+1$ for any fixed integer $k$, the inner-product of $\sin \mu x$ and $\cos \lambda x$ over the subdomain $[0,\pi]\times[0,2\pi]$ will have a uniform lower bound depending on $k$.  This observation indicates that we need to assume some separation for $\lambda$ and $\mu$. The same example also indicates that the best decay we can get, without any further geometric assumption on the subdomain or  manifold, will be no better than $|\lambda-\mu|^{-1}$. Furthermore, on  the flat torus $\mathbb T^2=[0,2\pi]^2$, it is not hard to see that if the boundary $\partial \Omega$ has non-vanishing curvature, then we can get stronger decay.

If we integrate by parts twice, we can see that this inner product $\eqref{problem9}$ is closely connected to the the restriction of eigenfunctions to the boundary $\partial \Omega$. Indeed, if $0<\mu<c\la$ for some $c<1$, by Green's formula, we have
\begin{equation}(\la^2-\mu^2)\int_{\Omega}e_\lambda \overline{e_\mu}\, dx=\int_{\partial\Omega}e_\lambda \overline{\frac{d}{d\nu}e_\mu}\, d\sigma-\int_{\partial\Omega}\frac{d}{d\nu}e_\lambda \overline{e_\mu}\, d\sigma,\  
\end{equation}
which have apparent ties to inner product of eigenfunctions over curves. Indeed, by our proof of Theorem \ref{thm1} and \ref{thm4} one sees that 
\begin{equation}\left|\int_{\partial\Omega}e_\lambda \overline{\frac{d}{d\nu}e_\mu}\, d\sigma\right|\le C\mu^\frac54,
\end{equation}
as well as that
\begin{equation}\label{dn}\left|\int_{\partial\Omega}\frac{d}{d\nu}e_\lambda \overline{e_\mu}\, d\sigma\right|\le C\la\mu^\frac14,
\end{equation}
which implies that
\begin{equation}\label{domain}\left|\int_{\Omega}e_\lambda \overline{e_\mu}\, dx\right|\le C\frac{\la\mu^\frac14}{\la^2 -\mu^2}.
\end{equation}
 On the other hand, \eqref{dn} also trivially follows from the $L^2$-restriction bound \eqref{rest} of Burq, G{\'e}rard and Tzvetkov \cite{BGT}, and a result of Christianson, Hassell, and Toth \cite{Andrew} on the restriction of normal derivatives of eigenfunctions, which says that
\begin{equation}\Big\|\frac{d}{d\nu}e_\lambda\Big\|_{L^2{(\partial\Omega)}}\le C\la.
\end{equation}

Considering the examples on $\mathbb T^2$, the best bound we can hope for is 
\begin{equation}\label{domain'}\left|\int_{\Omega}e_\lambda \overline{e_\mu}\, dx\right|\le C\frac{1}{\la -\mu}.
\end{equation}
Therefore, it would be interesting to see if we can prove that under some suitable geometric assumptions,
\begin{equation}\left|\int_{\partial\Omega}\frac{d}{d\nu}e_\lambda \overline{e_\mu}\, d\sigma\right|\le C\la,\quad\text{and }\left|\int_{\partial\Omega}e_\lambda \overline{\frac{d}{d\nu}e_\mu}\, d\sigma\right|\le C\la,
\end{equation}
which would imply \eqref{domain'} for the torus.
\subsection{Improving \eqref{inner}, \eqref{inner'} and \eqref{period} for  Manifolds with Non-positive Curvature}
It would be interesting to see if we can improve \eqref{inner} or \eqref{period} if our surface is non-positively curved. First of all, as mentioned in \S 4, \eqref{inner} is not optimal on $\mathbb T^2$, which raises the question whether we can improve \eqref{inner} for non-positively curved surfaces. Such an improvement would potentially unify the improved $L^2$-restriction bound of Blair and Sogge \cite{BSTop}, and the improved period integral bound of Sogge Xi and Zhang \cite{gauss}. To be more specific, for $0<\mu<c\la$, we seek a bound in the  form that
	\begin{equation}\label{inner''}
\Big|\int_{\gamma}e_\lambda \overline{e_\mu}\, ds\Big|\le C \frac{\mu^\frac14}{(\log\la)^\frac12},
\end{equation}
for s closed geodesic $\gamma$ on a surface with suitable curvature assumptions. On the other hand, for the generalized periods, it is conjectured in \cite{Rez} that if $M$ is a compact hyperbolic surface, $\gamma$ is a close geodesic, then for any $\eps$, $\nu<c\la$ for some $0<c<1$,
	\begin{equation}\label{period'}
\Big|\int_{\gamma}e_\lambda e^{-i\nu s}\, ds\Big|\le C_\eps \la^{-\frac12+\eps}.
\end{equation}
Even though  \eqref{period'} seems unreachable by current methods, if we exploit the argument in \cite{gauss} it seems quite plausible to get a log-type improvement over the generalized period bounds of Reznikov \eqref{GP} for compact hyperbolic surfaces, in the sense that
	\begin{equation}\label{Gperiod'}
\Big|\int_{\gamma}e_\lambda e^{-i\nu s}\, ds\Big|\le C {(\log\la)}^{-\frac12},
\end{equation}
for $\nu<c\la$.\ \footnote{Shortly after posting this paper, the author proved \eqref{Gperiod'} by a refinement of the arguments in \cite{gauss}, see \cite{GP}.}
\bibliography{EF}{}
\bibliographystyle{alpha}
\end{document}